\newtheorem{proposition}{Proposition}
\theoremstyle{definition}
\newtheorem{remark}{Remark}
\newcommand{\1}[1]{\mathbf{1}_{#1}}
\newcommand{\Bbar}{\overline{B}}
\newcommand{\LT}[1]{\widetilde #1}
\newcommand{\N}{\mathbb{N}}
\newcommand{\RETURN}{\textbf{return}}
\newcommand{\R}{\mathbb{R}}
\newcommand{\STABLE}{\textbf{Stable}}
\newcommand{\Xbar}{\overline{X}}
\newcommand{\dto}{\downarrow}
\newcommand{\false}{\text{false}}
\newcommand{\gb}{\beta}
\newcommand{\gd}{\delta}
\newcommand{\gk}{\kappa}
\newcommand{\gl}{\lambda}
\newcommand{\pibar}{\overline{\Pi}}
\newcommand{\rmd}{{\rm d}}
\newcommand{\rme}{\mathbf{e}}
\newcommand{\rmi}{{\rm i}}
\newcommand{\sF}{\mathcal{F}}
\newcommand{\sH}{\mathcal{H}}
\newcommand{\sL}{\mathcal{L}}
\newcommand{\sS}{\mathcal{S}}
\newcommand{\sgn}{\operatorname{sgn}}
\newcommand{\sinc}{\operatorname{sinc}}
\newcommand{\true}{\text{true}}
\newcommand{\tu}{\tau(u)}
\newcommand{\whH}{\widehat{H}}
\newcommand{\whL}{\widehat{L}}
\newcommand{\whtau}{\widehat{\tau}}
\newcommand{\wh}{\widehat}
\pgfplotsset{%
    width=6.5cm,
    clean/.style={mark=none,color=black},
    filled/.style={mark=none, color=black, fill=magenta},
    standard/.style={%
        axis x line=bottom,
        axis y line=left,
        every axis x label/.style={at={(current axis.right of origin)}, xshift=20pt},
        every axis y label/.style={at={(current axis.above origin)}, right, yshift=10pt}
    },
    colormap={whitehot}{color(0cm)=(white) color(0.05cm)=(pink) color(0.5cm)=(magenta)},
    colormap={whitecold}{color(0cm)=(white) color(0.1cm)=(cyan) color(0.5cm)=(blue)}
}
\begin{document}

\title[Ruin for Tempered Stable Processes]{Finite Time Ruin Probabilities for Tempered Stable Insurance Risk Processes}
\author{Philip S. Griffin \and Ross A. Maller \and Dale Roberts}
\address{%
\begin{tabular}{lll}
Philip S.Griffin & Ross A. Maller & Dale Roberts\\
{\sf psgriffi@syr.edu} & {\sf ross.maller@anu.edu.au} & {\sf dale.roberts@anu.edu.au}\\[0.5em]
215 Carnegie Building & Mathematical Sciences Institute & Mathematical Sciences Institute \\
Syracuse University & Australian National University & Australian National University \\
Syracuse, NY 13244-1150 & Canberra ACT 0200, Australia & Canberra ACT 0200, Australia\\
\end{tabular}
}
\thanks{This work was partially supported by a grant from the Simons Foundation (\#226863 to Philip Griffin) and by ARC Grant DP1092502}

\begin{abstract}
  We study the probability of ruin before time $t$ for the family of tempered stable L\'evy insurance risk processes, which includes the spectrally positive inverse Gaussian processes. Numerical approximations of the ruin time distribution are derived via the Laplace transform of the asymptotic ruin time distribution, for which we have an explicit expression.  These are benchmarked against simulations based on importance sampling using stable processes.  Theoretical consequences of the asymptotic formulae are found to indicate some potential drawbacks to the use of the inverse Gaussian process as a risk reserve process.  We offer as  alternatives natural generalizations which fall within the  tempered stable family of processes.
\end{abstract}

\keywords{Ruin probabilities; Insurance risk; L\'evy process; Fluctuation theory; Convolution equivalent; Tempered stable; Inverse Gaussian}

\maketitle

\section{Introduction}\label{sInt}

The risk reserve of an insurance company has traditionally been modelled as a compound Poisson process with drift. In recent years more general L\'evy processes have been proposed, among them the inverse Gaussian family of processes.  Such processes have been found to approximate reasonably well a wide range of aggregate claims distributions \cite{CGT}. While the probability of eventual ruin has received a lot of attention, arguably of equal importance in  practice is the probability of ruin before some finite time horizon.
Our paper aims to study the probability of ruin before time $t$ for the inverse Gaussian family and a natural  generalisation, the tempered stable processes.

The basis of our investigation is the recent asymptotic representation, as the initial reserve grows large, of the ruin time distribution for more general ``medium-heavy'' convolution equivalent L\'evy processes \cite{G,GM2}.  This representation, via the calculation of its Laplace transform,  lends itself to a numerical approximation of the ruin time distribution, which is then benchmarked  against values obtained by simulation. Thus we are able to illustrate the use of a broad, relatively simple and computationally tractable family of processes with which to model the  risk reserve process.

We find that the asymptotic representation performs well even when the initial capital is relatively small, contrary to a view that asymptotic formulas may only be useful when the initial capital becomes extremely large.
 Additionally, the asymptotic representation provides some interesting insight with  regard to safety loading management. When a realistic safety loading is specified in the insurance risk model, we show that processes within the tempered stable family may exhibit undesirable exponential growth (in time) of the ruin probabilities, at least asymptotically. This indicates that some caution may need to be exercised in the choice of model and to aid with this task, we derive a useful relationship between the parameters to avoid an unpleasant scenario. This might have interesting implications for practitioners concerned with safety loading management.

Empirically we also observe that the asymptotic formula provides a useful lower bound for the ruin probability that can be combined with the infinite horizon ruin probability to provide a practical approximation of the true ruin probability.

\subsection{L\'evy insurance risk model}

Let $X=\{X_{t}: t \geq 0 \}$, $X_0=0$,  be a L\'{e}vy process defined on $(\Omega, \sF, P)$, with canonical triplet $(\gamma_X, \sigma_X^2, \Pi_X)$.  The characteristic function of $X$ then has the L\'{e}vy-Khintchine representation $Ee^{i\theta X_{t}} = e^{t \Psi_X(\theta)}$, where
\begin{equation}\label{lrep}
\Psi_X(\theta) = \rmi\theta \gamma_X - \tfrac{1}{2}\sigma_X^2\theta^2+ \int_{\R}(e^{\rmi\theta x}-1- \rmi\theta x \1{\{|x|<1\}})\Pi_X(\rmd x),\ {\rm for}\
 \theta \in \R.
\end{equation}

In the {\it general L\'evy insurance risk model}, the claim surplus process, which represents the excess in claims over income, is modelled by a  L\'evy process $X$ with $X_t\to -\infty$ almost surely.  Claims are represented by positive jumps, while premia and other income produce a downward drift in $X$.   The insurance company starts with a positive reserve $u$, and  ruin  occurs if this level is exceeded by $X$. The assumption $X_t\to -\infty$ a.s.\ is a reflection of the premium being set to avoid certain ruin. This setup generalises the classical Cram\'{e}r-Lundberg model in which
\begin{equation}\label{CL}
X_t=\sum_{i=1}^{N_t} U_i - pt,
\end{equation}
where the nonnegative random variables $U_i$ form an i.i.d.\ sequence with finite mean $\mu$, $N_t$ is an independent rate $\gl$ Poisson process,  and $p>\lambda\mu$.  Here $U_i$ models the size of the $i$th claim and $p$ represents the rate of premium inflow.  The assumption $p>\lambda\mu$ is the \emph{net profit condition} needed to ensure that $X_t\to -\infty$ a.s.
See \cite{Ar2} for background.



\subsection{The convolution equivalent model}
\label{sub:the_convolution_equivalent_model}

A natural class which includes the tempered stable distribution and the inverse Gaussian distribution is the class of \emph{convolution equivalent distributions}. Definitions and basic results for  convolution equivalent distributions and the corresponding  convolution equivalent L\'evy insurance risk processes are set out in  detail in  Kl\"{u}ppelberg, Kyprianou and Maller \cite{kkm} and Griffin and Maller \cite{GM2}, and associated papers, so we only outline the main ideas here.  A comparison of the medium heavy convolution equivalent condition, the light-tailed  Cram\'er condition ($E e^{\nu_o X_1} = 1$ for some $\nu_0 > 0$) and the heavy tailed subexponential condition can also be found in \cite{GM2}.

Denote the class  of (non-negative) convolution equivalent distributions of index $\alpha>0$ by ${\sS}^{(\alpha)}$. A L\'evy process is said to be {\it convolution equivalent}\footnote{See Borovkov and Borovkov \cite{BandB} and Foss, Korshunov and Zachary \cite{FKZ} for further background on subexponential and convolution equivalent distributions.}, written
\begin{equation}\label{c1}
X_1^+\in {\sS}^{(\alpha)}\ {\rm for\ some}\ \alpha >0,
\end{equation}
if the distribution of $X_1^+$ is in $\sS^{(\alpha)}$ for some $\alpha > 0$.
The \emph{convolution equivalent L\'evy insurance risk model} is one in which
\begin{equation}\label{c3}
X_1^+\in {\sS}^{(\alpha)}\ \text{for some}\ \alpha >0\ \text{ and }\ X_t\to -\infty\ \ a.s.
\end{equation}

Membership of ${\sS}^{(\alpha)}$, by definition, is a property of the positive tail of the distribution of $X_1$. Condition \eqref{c1} can equivalently be expressed in terms of the positive tail $\pibar_X^+(u)=\Pi_X((u,\infty))$ of the L\'evy measure
(see \cite{kkm}).
Assuming $\pibar_X^+(x_0)>0$ for some $x_0>0$, so that $X$ has positive jumps with probability 1, we say that $\pibar_X^+\in {\sS}^{(\alpha)}$ if the same is true of the corresponding renormalised tail $(\pibar_X^+(\cdot)/\pibar_X^+(x_0))\wedge 1$.
With this understanding, \eqref{c1} is equivalent to
\begin{equation}\label{c2}
\overline{\Pi}^+_{X} \in {\sS}^{(\alpha)}\ {\rm for\ some}\ \alpha >0.
\end{equation}

Convolution equivalent distributions of index $\alpha$ have exponential moments of order $\alpha$, but of no larger orders.  Thus, if $\psi_X$ denotes the cumulant of $X$, so that
\begin{equation*}
Ee^{\beta X_t} = e^{t\psi_X(\beta)},
\end{equation*}
then $\psi_X(\beta)$ is finite if and only if $\beta\le \alpha$.

Some asymptotic aspects of the model \eqref{CL} where $U_1$ has a convolution equivalent distribution were recently considered by Tang and Wei \cite{TangWei10}. In particular, explicit asymptotic formulas for the Gerber-Shiu function in the infinite horizon case were derived.
Theoretical and numerical comparisons between models under the Cram\'er condition or a convolution equivalent condition were recently carried out in \cite{GMvS12} for general L\'evy insurance risk processes. It was observed that the ``medium-heavy'' regime transitions continuously into the ``light-tailed'' Cram\'er regime as certain parameters describing the models are varied. The convolution equivalent model was suggested as providing a broad and flexible apparatus for modelling the insurance risk process.

\subsection{Eventual ruin}

Convolution equivalent L\'evy processes were introduced into risk theory in  \cite{kkm}.  In addition to \eqref{c2}, \cite{kkm} assumed
\begin{equation}\label{Eless1}
Ee^{\alpha X_1}< 1.
\end{equation}
Condition \eqref{Eless1} implies that $(e^{\alpha X_t})_{t\ge 0}$ is a non-negative supermartingale from which it follows that $X_t\to -\infty$ a.s., so the second condition in \eqref{c3} is automatic in this case.

For a given initial reserve $u>0$, the \emph{ruin time} is defined by
\begin{equation}\label{tau}
\tau(u)= \inf \{t \geq0 : X_{t}>u \}.
\end{equation}
The main results in \cite{kkm} include the following asymptotic estimate for the probability of eventual ruin. Assume \eqref{c2} and  \eqref{Eless1}. Then
\begin{equation}\label{tult}
\lim_{u\to\infty} \frac{P(\tau(u)<\infty)}{\overline{\Pi}^+_{X}(u)}=\frac {Ee^{\alpha \Xbar_\infty}}{-\psi_X(\alpha)},
\end{equation}
where
\begin{equation}\label{GXbar}
\Xbar_t=\sup_{0\le s\le t} X_s.
\end{equation}
This expression for the limit differs in form from that given in \cite{kkm}, but is equivalent; see Remark~\ref{rem1}.  Under  \eqref{Eless1}, $\psi_X(\alpha)<0$ and $Ee^{\alpha \Xbar_\infty}<\infty$.  If $Ee^{\alpha X_1}\in [1,\infty)$ then $Ee^{\alpha \Xbar_\infty}=\infty$,  but  $Ee^{\alpha \Xbar_t}<\infty$ for all $t\ge 0$; see Lemma 2.1 in \cite{G}.

\subsection{Ruin in finite time} \label{sub:finite_time}

A more difficult problem than the probability of eventual ruin is to find the distribution of the ruin time itself. For convolution equivalent processes, partial results in this direction  were obtained by
Braverman \cite{br}, Braverman and Samorodnitsky \cite{BS}, and Albin and Sund\'en \cite{AS}.\footnote{Heavy tailed (subexponential processes) are treated in Asmussen and Kl\"{u}ppelberg \cite{AK}. For the light-tailed ``Cram\'er case", see \cite{Ar2}.}
 More recently,  the following explicitly defined asymptotic estimate was obtained in Griffin \cite{G}  and Griffin and Maller \cite{GM2} under the sole assumption \eqref{c2}:
\begin{equation}\label{rft}
P(\tau(u)\le t)= \overline{\Pi}^+_{X}(u)B(t) +o({\overline{\Pi}^+_{X}(u)})\ \text{ a.s. } u\to\infty,
\end{equation}
where the function $B(t)$ satisfies
\begin{equation}\label{H}
B(t)=\int_0^t
e^{\psi_X(\alpha) s}Ee^{\alpha \Xbar_{t-s}}\ d s.
\end{equation}
Under Condition \eqref{Eless1}, the estimate in \eqref{rft} is uniform in $t\ge 0$,  $B$ is bounded, and by monotone convergence,  $B(t)$ increases as $t\to\infty$ to
\begin{equation}\label{Binf}
B(\infty)=\frac {Ee^{\alpha \Xbar_\infty}}{-\psi_X(\alpha)}
\end{equation}
which coincides with the limit in \eqref{tult}.  In this case the estimate may be rewritten in a more  intuitively appealing form. From \eqref{tult},  \eqref{rft} and \eqref{Binf}, we have for $t>0$
\begin{equation}\label{tulta}
P(\tau(u)\le t)={P(\tau(u)<\infty)}\left(\frac{B(t)}{B(\infty)}+o(1)\right)\ \text{ as } u\to\infty.
\end{equation}
Thus, asymptotically as $u\to\infty$, $P(\tau(u)\le t)$ factors as the product of the probability of eventual ruin and a distribution function in $t$ given by $B(t)/B(\infty)$.  Moreover this estimate is  uniform in $t\ge 0$.

When $Ee^{\alpha X_1}\ge 1$, \eqref{rft} provides an estimate which is uniform on compact sets in $t\ge 0$, but the function $B$ is unbounded and the limit in \eqref{tult} is infinite. Thus $P(\tau(u)\le t)$ is no longer proportional to $P(\tau(u)<\infty)$ and
\eqref{tulta} does not hold.

\subsection{Overview}\label{sub:overview}

The quite explicitly defined form of $B(t)$ in \eqref{H}  opens the possibility of calculating it numerically for an appropriate class of models, with the hope that the estimates may be used for guidance in some real-life modelling situations so as to derive useful information about the ruin time distribution. But in  order to implement this program, a number of questions must be addressed. First, what can be said about properties of $B$? Second, how do we obtain a good numerical approximation for the expression given by \eqref{H}? Third, once numerical results are at our disposal, how well do the approximations \eqref{rft} and \eqref{tulta} perform compared to, say, a direct simulation of the ruin time probabilities for different values of $u$ and $t$?  The aim of this paper is to give answers to these questions.

\section{Some Fluctuation Theory}\label{propofF}

In order to investigate properties of $B$ we need to introduce some notation and a few basic results from the fluctuation theory of L\'evy processes as set out in
Bertoin \cite{bert}, Sato \cite{sato} and Kyprianou \cite{kypbook}.

\subsection{Inverse local-time and ladder height processes}
\label{sub:inverse_local_time_and_ladder_height_processes}

Let $(L^{-1}_t,H_t)_{t \geq 0}$ denote the bivariate ascending inverse local time and ladder height subordinator process of $X$.
 The  bivariate descending inverse local time and ladder height subordinator
  is denoted by $(\widehat{L}_t,\widehat{H}_t)_{t\geq0}$.
  Their Laplace exponents $\kappa(a,b)$ and $\wh \kappa(a,b)$
are defined, for values of $a,b\in \R$ for which the expectations are finite, by
\begin{equation} \label{kapdef}
e^{-\kappa(a,b)} = E (e^{-aL^{-1}_1 -bH_1};1<L_\infty)
\quad {\rm and}\quad
e^{-\wh \kappa(a,b)} =E(e^{-a\wh L^{-1}_1-b\wh H_1};1<\wh L_\infty).
 \end{equation}
The random variables $L_\infty$ and $\wh L_\infty$ are exponentially distributed  with parameters $q\ge 0$ and $\wh q\ge 0$ respectively, with the understanding  that if $q$ or $\wh q$ is zero then the resulting random variable is identically infinite.
  We can write
\begin{equation} \label{kapexp}
\kappa(a,b) = q+\rmd_{{L}^{-1}}a+\rmd_{H}b+\int_{t\ge0}\int_{h\ge0} \left(1-e^{-at-bh}\right) \Pi_{{L}^{-1}, {H}}(\rmd t, \rmd h),
\end{equation}
where $\rmd_{{L}^{-1}}\ge 0$ and $\rmd_{H}\ge 0$ are drift constants, and
$\Pi_{{L}^{-1}, {H}}(\rmd t, \rmd h)$ is the bivariate L\'evy measure of $(L^{-1},H)$.
 Similarly,
 \[
\wh \kappa(a,b) =\wh q+\rmd_{{\wh L}^{-1}}a+\rmd_{\wh H}b
+\int_{t\ge0}\int_{h\ge0} \left(1-e^{-at-bh}\right)
\Pi_{{\wh L}^{-1}, {\wh H}}(\rmd t, \rmd h).
\]
These integrals are finite at least for $a\ge0$, $b\ge0$.
We denote the marginal  L\'evy measures of $L^{-1}$ and $H$ by
$\Pi_{L^{-1}}(\rmd t)$ and $\Pi_{H}(\rmd h)$, and similarly for the corresponding hat quantities.

When $\lim_{t\to \infty} X_{t} = -\infty$ a.s.\ the increasing ladder process
$(L^{-1}_t,H_t)_{t \geq 0}$ is defective.  In that case $(L^{-1},H)$ is obtained from  a non-defective bivariate subordinator $({\sL}^{-1},{\sH})$ by independent exponential killing with rate $q > 0$. The decreasing ladder process
 $(\widehat{L}_t,\widehat{H}_t)_{t\geq0}$ is proper when $X_{t} \to -\infty$ a.s.,
 and we then have $\wh q=0$.

The Wiener-Hopf factors of $X$ may be expressed in terms of these Laplace exponents.  In particular
\begin{equation}\label{WHk}
Ee^{-a \Xbar_\rme}=\frac{\kappa(\gd,0)}{\kappa(\gd,a)}
\end{equation}
where $\rme$ is independent of $X$ and exponentially distributed with parameter $\gd$, and $a\ge 0$.  If  $\pibar^+_X\in{\sS}^{(\alpha)}$, then $Ee^{\alpha X_1}<\infty$, $Ee^{\alpha H_1}<\infty$ and \eqref{WHk} remains true for
$a \ge -\alpha$.

The Wiener-Hopf factorization involves an arbitrary constant which we may take to be one by choice of normalization of the local times.  In other words we assume
\begin{equation}\label{WHF}
-\log Ee^{i\theta X_1}=[-\log Ee^{i\theta H_1}][-\log Ee^{-i\theta \whH_1}].
\end{equation}

\subsection{The spectrally positive case}\label{sSP}

When $X$ is spectrally positive, that is, $\Pi_X((-\infty,0))=0$,
more explicit expressions are available for $\gk$ and $\wh\gk$.
In this situation we  take $\whL_t=-\inf_{0<s\le t}X_s$, so the inverse process $\whL_y^{-1}$ is the passage time subordinator
\begin{equation}\label{whtauy}
\whtau_y:=\inf\{t>0:\inf_{0<s\le t}X_s<-y\},\ y\ge 0,
\end{equation}
and $\whH_t=t$ on $\{\whtau_t<\infty\}$.
Let
\begin{equation}\label{Phi}
\Phi_X(\delta)= \inf\{\beta:\psi_X(\beta)=\delta\}, \ \delta\ge 0.
\end{equation}
Since $\psi_X$ is strictly decreasing on $(-\infty, \Phi_X(0)]$, the function $\Phi_X:[0,\infty)\to (-\infty, \Phi_X(0)]$ is the inverse of the restriction of
$\psi_X$  to $(-\infty, \Phi_X(0)]$.  It follows that
\begin{equation}
\label{kapphipsi1}
\wh\kappa_X(\delta,\beta) = \beta-\Phi_{X}(\delta), \quad \delta\ge 0, \beta\in \R,
\end{equation}
and as a consequence of the choice of normalisation in \eqref{WHF},
\begin{equation}
\label{kapphipsi2}
\kappa_X(\delta,\beta)= \frac{\psi_X(-\beta)-\delta}{\beta+\Phi_{X}(\delta)},
 \quad \delta\ge 0, \beta\ge 0.
\end{equation}
See Section 8.1 of \cite{kypbook} or Section VII.1 of \cite{bert} which, note, both apply to spectrally negative processes.
If in addition $\pibar^+_X\in{\sS}^{(\alpha)}$, then \eqref{kapphipsi2} remains true for
$\delta\ge 0, \beta\ge -\alpha$.

\section{Properties of $B$ and consequences for the insurance risk process}\label{propofB}

We now present some analytical properties of the function $B$ and discuss their implications for the L\'evy insurance risk process.

\subsection{Laplace transform and rate of growth of $B$}\label{RGLT}

Direct analytic evaluation of $B$ through \eqref{H} is not feasible so we turn to evaluating it by numerically inverting its Laplace transform.  The following proposition provides the required theoretical result.

\begin{proposition}\label{LTofB}
Assume \eqref{c2} holds.  Then for $\gd > \psi_X(\alpha)\vee 0$,
\begin{equation} \label{HLT}
\LT{B}(\gd) := \int_0^\infty e^{-\gd t}B(t)\rmd t = \frac{\kappa(\gd, 0)}{\gd(\gd-\psi_X(\alpha))\kappa(\gd,-\alpha)}.
\end{equation}
When, further,  $X$ is spectrally positive, this takes the form
\begin{equation}\label{HLTspecpos}
	\LT{B}(\gd) = \frac {\Phi_X(\gd)-\alpha}{(\gd-\psi_X(\alpha))^2\Phi_X(\gd)}.
\end{equation}
\end{proposition}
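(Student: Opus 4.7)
The key observation is that $B$ as defined in \eqref{H} is exactly the convolution of two non-negative functions on $[0,\infty)$:
$$B(t) = (f * g)(t), \quad \text{where } f(s) = e^{\psi_X(\alpha) s},\ g(s) = E e^{\alpha \Xbar_s}.$$
So I would obtain $\LT{B}(\gd)$ simply by multiplying $\LT f(\gd)$ and $\LT g(\gd)$. The computation of $\LT f$ is immediate: for $\gd > \psi_X(\alpha)$, $\LT f(\gd) = 1/(\gd - \psi_X(\alpha))$. Since $B \ge 0$ and all integrands are non-negative, Fubini justifies this factorisation.

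The main content is the identification of $\LT g$. Here I would use the exponential-time trick: if $\rme$ is an independent $\EXP(\gd)$ random variable then
$$\gd \LT g(\gd) = \int_0^\infty \gd e^{-\gd s} E e^{\alpha \Xbar_s}\, \rmd s = E e^{\alpha \Xbar_\rme}.$$
Under \eqref{c2}, the Wiener-Hopf identity \eqref{WHk} is available for $a \ge -\alpha$, so taking $a = -\alpha$ yields
$$E e^{\alpha \Xbar_\rme} = \frac{\kappa(\gd,0)}{\kappa(\gd,-\alpha)},$$
and hence $\LT g(\gd) = \kappa(\gd,0) / \bigl(\gd\,\kappa(\gd,-\alpha)\bigr)$. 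Multiplying by $\LT f(\gd)$ gives \eqref{HLT}. I would need to check that the domain condition $\gd > \psi_X(\alpha) \vee 0$ is exactly what is needed both to make $\LT f$ finite and to legitimately evaluate \eqref{WHk} at $a = -\alpha$ (finiteness of $Ee^{\alpha H_1}$ under \eqref{c2} is noted in the text just after \eqref{WHk}).

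For the spectrally positive specialisation \eqref{HLTspecpos}, the plan is a direct substitution using \eqref{kapphipsi2}, which under \eqref{c2} extends to $\beta \ge -\alpha$. Evaluating at $\beta = 0$ and $\beta = -\alpha$ gives
$$\kappa_X(\gd,0) = \frac{-\gd}{\Phi_X(\gd)}, \qquad \kappa_X(\gd,-\alpha) = \frac{\psi_X(\alpha)-\gd}{\Phi_X(\gd)-\alpha}.$$
Plugging these into \eqref{HLT} and simplifying (one factor of $\gd$ and one sign from $\psi_X(\alpha) - \gd = -(\gd - \psi_X(\alpha))$ combine with the remaining $(\gd - \psi_X(\alpha))$ to give the square in the denominator) yields \eqref{HLTspecpos}.

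The only genuinely delicate step is justifying the extended Wiener-Hopf identity at $a=-\alpha$ and ensuring all quantities are finite on the stated range of $\gd$; everything else is the convolution theorem plus algebra. The convergence at the boundary $\gd = \psi_X(\alpha) \vee 0$ is consistent with the behaviour of $B(\infty)$ identified in \eqref{Binf}, which provides a useful sanity check on the final formulas.
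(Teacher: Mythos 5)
Your proof is correct and is essentially the paper's argument: the paper carries out the same Fubini/Tonelli interchange explicitly (arriving at $\gd\LT{B}(\gd)=(\gd-\psi_X(\alpha))^{-1}Ee^{\alpha\Xbar_\rme}$) rather than naming it the convolution theorem, and then, exactly as you do, invokes \eqref{WHk} at $a=-\alpha$ (valid under \eqref{c2}) and \eqref{kapphipsi2} at $\beta=0,-\alpha$ for the spectrally positive form. Your packaging of $B=f*g$ and the algebra for \eqref{HLTspecpos} are both right, so no changes are needed.
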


\begin{proof}
Substituting for $B$ from \eqref{H} we obtain
\begin{equation}
\begin{aligned}
\gd\int_0^\infty e^{-\gd t}B(t)\,\rmd t
&= \gd \int_0^\infty e^{-\gd t}\int_0^t e^{\psi_X(\alpha)s} E e^{\alpha \Xbar_{t-s}}\,\rmd s\,\rmd t\\
&= \gd \int_0^\infty e^{-\gd t}e^{\psi_X(\alpha)t} \int_0^t e^{-\psi_X(\alpha)s} E e^{\alpha \Xbar_s}\,\rmd s\,\rmd t\\
&=  \gd\int_0^\infty \int_s^\infty e^{-(\gd -\psi_X(\alpha)) t}\,\rmd t\, e^{-\psi_X(\alpha)s} E e^{\alpha \Xbar_s}\,\rmd s\\
&= \frac{1}{\gd - \psi_X(\alpha)} \int_0^\infty  E e^{\alpha \Xbar_s}\gd e^{-\gd s}\,\rmd s\\
&= \frac{1}{\gd - \psi_X(\alpha)} E e^{\alpha \Xbar_\rme}
\end{aligned}
\end{equation}
where $\rme$ is distributed as exponential with parameter $\gd$ independently of $X$. Thus \eqref{HLT} follows from \eqref{WHk}, and then \eqref{HLTspecpos} from \eqref{kapphipsi2}.
\end{proof}

Next we investigate how \eqref{Eless1} and complementary conditions relate to the growth of $B$.  This will have potential consequences for
modelling the insurance risk process.

\begin{proposition}\label{GrofB}
Assume \eqref{c2} holds.

(i)\ If $Ee^{\alpha X_1} <1$ then
\begin{equation}\label{Bn}
B(\infty)=\frac{q}{-\psi_X(\alpha)\kappa(0,-\alpha)}\in(0,\infty).
\end{equation}

(ii)\ If $Ee^{\alpha X_1} > 1$ then
\begin{equation} \label{GrC}
\lim_{t\to\infty} \frac{\ln B(t)}{t}=\psi_X(\alpha).
\end{equation}

(iii)\ If $Ee^{\alpha X_1} = 1$ then \eqref{GrC} continues to hold
(in which $\psi_X(\alpha)=0$),  together with
\begin{equation} \label{gg}
\liminf_{t\to\infty} \frac{B(t)}{t}\ge 1;
\end{equation}
thus, $B(\infty)=\infty$.
If in addition $EX_1e^{\alpha X_1}<\infty$, then
\begin{equation} \label{GrC1}
\limsup_{t\to\infty} \frac{B(t)}{t^2}<\infty.
\end{equation}
\end{proposition}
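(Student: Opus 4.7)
The plan is to exploit the Laplace transform identity \eqref{HLT} together with the Wiener-Hopf factorization
\[
\delta - \psi_X(\alpha) = \kappa(\delta, -\alpha)\,\wh\kappa(\delta, \alpha), \qquad \delta \geq 0,
\]
which follows from \eqref{WHF} extended to $\beta = -\alpha$ under \eqref{c2} (this uses $Ee^{\alpha H_1}<\infty$, noted after \eqref{WHk}; in the spectrally positive case, a direct verification via \eqref{kapphipsi1} and \eqref{kapphipsi2} confirms it). Throughout, $B$ is nondecreasing by inspection of \eqref{H}, so Abelian/Tauberian theorems for nondecreasing functions apply without further work.

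\textbf{Part (i).} The standard Abelian result for nondecreasing functions gives $B(\infty) = \lim_{\delta \dto 0}\delta \LT B(\delta)$. Substituting \eqref{HLT}, and using $\kappa(0,0) = q$ with continuity of $\kappa$ at $(0, -\alpha)$, the limit reduces to $q/(-\psi_X(\alpha)\kappa(0,-\alpha))$.

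\textbf{Part (ii).} The lower bound is direct: from $Ee^{\alpha\Xbar_u} \geq Ee^{\alpha X_u} = e^{u\psi_X(\alpha)}$ and \eqref{H}, $B(t) \geq t\, e^{t\psi_X(\alpha)}$, so $\liminf \ln B(t)/t \geq \psi_X(\alpha)$. For the upper bound I invoke the fact that for a nondecreasing function the abscissa of convergence of its Laplace transform coincides with its logarithmic growth rate. Substituting the Wiener-Hopf identity into \eqref{HLT} yields
\[
\LT B(\delta) = \frac{\kappa(\delta,0)\,\wh\kappa(\delta,\alpha)}{\delta(\delta-\psi_X(\alpha))^2},
\]
which is finite for $\delta > \psi_X(\alpha)$ but diverges as $\delta \dto \psi_X(\alpha)$, the numerator tending to the strictly positive finite limit $q\,\wh\kappa(\psi_X(\alpha),\alpha)$. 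This pins the abscissa of convergence at $\psi_X(\alpha)$ and yields \eqref{GrC}.

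\textbf{Part (iii).} With $\psi_X(\alpha)=0$, \eqref{H} reduces to $B(t) = \int_0^t Ee^{\alpha\Xbar_u}\,du \geq t$ (since $\Xbar_u \geq 0$), which is \eqref{gg}, and \eqref{GrC} follows from the abscissa-of-convergence argument of part (ii), the abscissa now being $0$. For \eqref{GrC1} I would apply Karamata's Tauberian theorem to the substituted form
\[
\LT B(\delta) = \frac{\kappa(\delta,0)\,\wh\kappa(\delta,\alpha)}{\delta^3},
\]
after checking that $\delta^3 \LT B(\delta) \to q\,\wh\kappa(0,\alpha) \in (0,\infty)$ as $\delta \dto 0$; the assumption $EX_1 e^{\alpha X_1}<\infty$ guarantees the regularity of $\wh\kappa(\,\cdot\,, \alpha)$ (equivalently $\Phi_X$ in the spectrally positive case) at the origin, needed for a clean Tauberian statement. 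Monotonicity of $B$ then delivers $B(t) \sim \tfrac{1}{2}q\,\wh\kappa(0,\alpha)\,t^2$, so $\limsup B(t)/t^2 < \infty$.

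The main obstacle is the passage from the exact Laplace transform formula to the growth rates of $B$: identifying the precise boundary behaviour of $\kappa(\delta,-\alpha)$ via the Wiener-Hopf identity (so that the double-zero in the denominator of \eqref{HLT} becomes a single zero once $\kappa(\delta,-\alpha) \sim (\delta-\psi_X(\alpha))/\wh\kappa(\psi_X(\alpha),\alpha)$ is factored out) and verifying the nonvanishing-limit hypothesis needed for Karamata in (iii) are the delicate points; the monotonicity of $B$ and the basic form of $\tilde B$ come for free.
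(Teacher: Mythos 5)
Your proposal is correct in substance, but for the hardest piece, \eqref{GrC1}, it takes a genuinely different route from the paper. Parts (i) and \eqref{gg} coincide with the paper's argument (the paper integrates by parts and lets $\gd\dto 0$ in \eqref{HLT}, which is exactly your Abelian limit $\gd\LT{B}(\gd)\to B(\infty)$, using $\kappa(0,0)=q>0$ and $\kappa(0,-\alpha)>0$ from Proposition 5.1 of \cite{kkm}). For \eqref{GrC} the paper applies Fekete's lemma to the subadditive function $\ln Ee^{\alpha \Xbar_s}$ and rules out a growth rate $r>\psi_X(\alpha)$ by contradiction with the finiteness of $\LT{B}(\gd)$ for $\gd>\psi_X(\alpha)$; your abscissa-of-convergence argument is a mild variant resting on the same input, and your direct bound $B(t)\ge te^{t\psi_X(\alpha)}$ supplies the liminf, so the extra step in which you let $\gd\dto\psi_X(\alpha)$ is superfluous (and contains a small slip: the numerator tends to $\kappa(\psi_X(\alpha),0)\,\wh\kappa(\psi_X(\alpha),\alpha)$, not $q\,\wh\kappa(\psi_X(\alpha),\alpha)$). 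The real divergence is \eqref{GrC1}: the paper argues probabilistically, using the submartingale property of $e^{\alpha X_t}$, Doob's $L^1$-maximal inequality, and the pathwise bound $X_n^+e^{\alpha X_n}\le\sum_i(\Delta X_i)^+\prod_j e^{\alpha\Delta X_j}$ — this is exactly where $EX_1e^{\alpha X_1}<\infty$ enters, yielding $Ee^{\alpha\Xbar_s}\le C(1+s)$ and hence quadratic growth of $B$ upon integration; you instead apply Karamata's Tauberian theorem to $\LT{B}(\gd)=\kappa(\gd,0)\wh\kappa(\gd,\alpha)/\gd^3$, obtained from the factorization $\gd-\psi_X(\alpha)=\kappa(\gd,-\alpha)\wh\kappa(\gd,\alpha)$. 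Your route is legitimate and in fact sharper — it yields $B(t)\sim\tfrac12 q\,\wh\kappa(0,\alpha)t^2$, and since $\kappa(\gd,0)\to q$ and $\wh\kappa(\gd,\alpha)\to\wh\kappa(0,\alpha)\in(0,\infty)$ hold by monotone convergence alone (your appeal to $EX_1e^{\alpha X_1}<\infty$ for ``regularity at the origin'' is not where the paper needs it, and is not actually needed for your limit), with $q>0$ because the Cram\'er condition forces $X_t\to-\infty$ — but it buys this at the cost of the bivariate Wiener--Hopf identity at the argument $-\alpha$ for all $\gd\ge0$, which the paper only records in the spectrally positive case via \eqref{kapphipsi1}--\eqref{kapphipsi2} and which in general requires an analytic-continuation argument (or a citation) that you should make explicit. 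The paper's Doob-inequality argument, by contrast, is self-contained, avoids any extension of the Wiener--Hopf factors, and gives an explicit non-asymptotic bound on $Ee^{\alpha\Xbar_s}$.
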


\begin{proof}
Assume $Ee^{\alpha X_1} <1$. In that case $\psi_X(\alpha)<0$, $\kappa(0, 0)=q > 0$ and $\kappa(0,-\alpha)>0$; see Proposition 5.1 of \cite{kkm}. First integrate  by parts and then use \eqref{HLT} to obtain
\begin{equation*}\begin{aligned}
\int_0^\infty e^{-\gd t}B(\rmd t)=\int_0^\infty \gd e^{-\gd t}B(t)\rmd t
=\frac{\kappa(\gd, 0)}{(\gd-\psi_X(\alpha))\kappa(\gd,-\alpha)},\ \ \gd>0.
\end{aligned}\end{equation*}
Letting $\gd\downarrow 0$ then gives \eqref{Bn}.

Now assume $Ee^{\alpha X_1} \ge 1$. Observe  that $\ln Ee^{\alpha \Xbar_{s}}$ is subadditive,
hence by Fekete's lemma,
\begin{equation}\label{r}
\lim_{s\to\infty}\frac{\ln Ee^{\alpha \Xbar_{s}}}{s}=r
\end{equation}
for some $r<\infty$.  Since  $\ln Ee^{\alpha \Xbar_{s}}\ge \ln Ee^{\alpha X_{s}}=s\psi_X(\alpha)$, it follows that
$r\in [\psi_X(\alpha),\infty)$.  If $r>\psi_X(\alpha)$, choose $\gd\in (\psi_X(\alpha), r)$.
It follows easily from \eqref{H} that
\begin{equation}
\liminf_{t\to\infty} \frac{\ln B(t)}{t}>\gd.
\end{equation}
But then $\LT{B}(\gd) =\infty$ which contradicts \eqref{HLT}, so $r=\psi_X(\alpha)$.

Finally assume $Ee^{\alpha X_1} = 1$.  Then $\psi_X(\alpha)=0$ so \eqref{gg}
is immediate from \eqref{H}.
In general when $ \pibar_X^+\in {\sS}^{(\alpha)}$, we know  only that $Ee^{\alpha X_{1}}<\infty$, but now assume further that
$
E(X_1e^{\alpha X_{1}})<\infty.
$
Since $e^{\alpha X_t}$ is a (sub)martingale, by Doob's $L^1$-maximal inequality (see Exercise 5.4.6 of \cite{Dur}),
\begin{equation}\label{L1}
Ee^{\alpha \Xbar_n}\le (1-e^{-1})\left(1+E(\alpha X_n^+e^{\alpha X_n})\right).
\end{equation}
Now
\begin{equation}
X_n^+e^{\alpha X_n}\le \sum_{i=1}^n(\Delta X_i)^+\Pi_{j=1}^ne^{\alpha \Delta X_j}
\end{equation}
where $\Delta X_i=X_i-X_{i-1}$.  Thus
\begin{equation}\label{L2}
E(X_n^+e^{\alpha X_n})\le nE\left((\Delta X_1)^+e^{\alpha \Delta X_1}\right)(Ee^{\alpha \Delta X_1})^{n-1}.
\end{equation}
Hence substituting into \eqref{L1} and using monotonicity, for some constant $C<\infty$ and all $s\ge 0$
\begin{equation}\label{Doob}
Ee^{\alpha \Xbar_{s}}\le C(1+s)e^{\psi_X(\alpha)s}=C(1+s),
\end{equation}
from which \eqref{GrC1} follows after substitution in \eqref{H}.
\end{proof}

\begin{remark}\label{rem1}
 {\rm  From  \eqref{Binf}, the limit in \eqref{tult} may be alternatively expressed as in \eqref{Bn}.  This is the form of the limit given in \cite{kkm}; see Theorem 4.1 and Proposition 5.3 therein.
The assumption $EX_1e^{\alpha X_1}<\infty$ arises in connection with Cram\'er's large deviation estimate for the probability of eventual ruin in the $\psi_X(\alpha)=0$ case; see \cite{BD}.}
\end{remark}

If $Ee^{\alpha X_1} > 1$ then \eqref{GrC} shows that $B(t)$ grows exponentially with $t$. Thus for appropriately large $u$ and $t$, increasing the time horizon by one unit increases  the probability of ruin by a factor of $e^r$, where $r$ is essentially $\psi_X(\alpha)$.
For example if $r=3$ this is a factor of at least 20. This is clearly a situation which would concern any insurance company.

If, instead,  $Ee^{\alpha X_1}= 1$ we are in the realm of the classical Cram\'er condition where $B$ grows subexponentially.
With the additional mild assumption $EX_1e^{\alpha X_1}<\infty$, $B$ grows at most quadratically, as shown by \eqref{GrC1}.  However $B$ is still unbounded and the estimate in \eqref{tult} is not uniform over all $t\ge 0$ in this case. Further, the  probability of eventual ruin is  of a different order to the probability of ruin in finite time.

If $Ee^{\alpha X_1} < 1$ then none of the above issues arise.  $B$ is bounded, the estimate is uniform in $t$ and the finite ruin time probabilities are comparable to the infinite horizon ruin probabilities.  Furthermore the modified form \eqref{tulta} of the limit holds, which as we will demonstrate provides a superior estimate for small $u$.

In conclusion, unless exponential growth of the finite horizon ruin probabilities is to be modelled, then it is necessary that the claims surplus process satisfy $Ee^{\alpha X_1}\le 1$.  Within this class, it may be desirable to further restrict to $Ee^{\alpha X_1}< 1$
due to the intuitive appeal,  and uniformity in $t$, of the estimate in \eqref{tulta}.

\subsection{Moments and smoothness of $B$}\label{LTMB}

In this section we give some subsidiary results which expand on the properties of $B$.
Since it will not be used in the remainder of the paper in an essential way, the proof of Proposition~\ref{Bprop} is deferred to an Appendix.

\begin{proposition}\label{Bprop}
Assume   $X$ is spectrally positive and has no Brownian component, \eqref{c2} holds, and $Ee^{\alpha X_1}<1$
(so that $\lim_{t\to\infty}X_t=-\infty$ a.s.).
Then $\int_0^\infty t B(dt)<\infty$; thus, the limit distribution corresponding to $B$ has finite expectation.
\end{proposition}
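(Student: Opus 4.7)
The plan is to exploit the explicit Laplace transform in \eqref{HLTspecpos}. Setting $G(\gd):=\int_0^\infty e^{-\gd t}\,B(\rmd t)$, integration by parts (justified by $B(0)=0$ and $B(\infty)<\infty$, the latter from Proposition~\ref{GrofB}(i)) identifies $G(\gd)=\gd\,\LT{B}(\gd)$, and monotone convergence applied to $(1-e^{-\gd t})/\gd\uparrow t$ as $\gd\dto 0$ gives
\begin{equation*}
\int_0^\infty t\,B(\rmd t)=\lim_{\gd\dto 0}\frac{G(0)-G(\gd)}{\gd}=-G'(0^+).
\end{equation*}
So the claim is equivalent to $G$ being right-differentiable at $0$ with finite derivative.

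From \eqref{HLTspecpos}, $G(\gd)=\gd(\Phi_X(\gd)-\alpha)/[(\gd-\psi_X(\alpha))^2\Phi_X(\gd)]$. Under $Ee^{\alpha X_1}<1$, convexity of $\psi_X$ together with $\psi_X(0)=0$ and $\psi_X(\alpha)<0$ forces $\psi_X'(0)=EX_1<0$ and $\Phi_X(0)=0$, so this expression is of $0/0$ type at $\gd=0$. My approach is to reparametrize by $\phi=\Phi_X(\gd)$, equivalently $\gd=\psi_X(\phi)$ with $\phi\uparrow 0$ as $\gd\dto 0$, rewriting
\begin{equation*}
G(\gd)=\frac{\psi_X(\phi)}{\phi}\cdot\frac{\phi-\alpha}{(\psi_X(\phi)-\psi_X(\alpha))^2}.
\end{equation*}
Both factors extend to $C^1$ functions of $\phi$ near $\phi=0$ as soon as $\psi_X$ is $C^2$ there. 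Since $\psi_X'(0)\ne 0$, the chain rule $G'(\gd)=(dG/d\phi)/\psi_X'(\phi)$ then gives $G'(0^+)$ finite, which is all that is needed.

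The main obstacle, and where the spectrally-positive and no-Brownian-component hypotheses enter essentially, is verifying $\psi_X''(0)=\mathrm{Var}(X_1)<\infty$. I would argue this by noting that $-X$ is spectrally negative with no Gaussian component, whence the L\'evy--Khintchine integrand $e^{\theta x}-1-\theta x\1{\{|x|<1\}}$ is integrable against the L\'evy measure of $-X$ (supported on $(-\infty,0)$) for every $\theta\ge 0$, yielding $Ee^{-\theta X_1}<\infty$ for all $\theta\ge 0$. Combined with the hypothesis $Ee^{\alpha X_1}<\infty$, this gives all polynomial moments of $X_1$; in particular $EX_1^2<\infty$, and the remaining differentiation is purely mechanical.
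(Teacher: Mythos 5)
Your proof is correct, and it takes a genuinely different and substantially more elementary route than the paper's. The paper works from the general form \eqref{HLT} of $\LT{B}$: it expands $\int_0^\infty e^{-\lambda t}(B(\infty)-B(t))\,\rmd t$, lets $\lambda\dto 0$, and reduces the claim to the finiteness of $\rmd_{L^{-1}}+\int t\,\Pi_{L^{-1}}(\rmd t)$ and of $\int t\int(e^{\alpha h}-1)\Pi_{L^{-1},H}(\rmd t,\rmd h)$ --- indeed it shows $\int_0^\infty tB(\rmd t)<\infty$ \emph{if and only if} these two ladder-measure integrals converge. Verifying them is where the heavy machinery enters: the representation \eqref{vsp} of the bivariate ladder measure for spectrally positive processes, the Doney--Maller moment result $E\whtau_1<\infty$, and --- the only place $\sigma_X=0$ is used --- a creeping argument ($\rmd_H=0$) extending \eqref{vsp} to $h=0$ to obtain \eqref{vsq}. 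Your route bypasses all of this by exploiting the closed form \eqref{HLTspecpos}: after the substitution $\gd=\psi_X(\phi)$ everything reduces to $\psi_X$ being $C^2$ near $0$ with $\psi_X'(0)\neq 0$, and the former holds because $0$ lies in the interior of $\{\beta:Ee^{\beta X_1}<\infty\}\supseteq(-\infty,\alpha]$ (spectral positivity supplies the negative half-line, \eqref{c2} the positive part), so $\psi_X$ is in fact analytic there; your chain-rule computation and the monotone-convergence identification of $\int_0^\infty tB(\rmd t)$ with $-G'(0^+)$ are both sound. Two remarks. First, your argument never uses the no-Brownian-component hypothesis (a Gaussian part does not affect $Ee^{-\theta X_1}<\infty$ for $\theta\ge 0$), so you actually prove a slightly stronger statement; that hypothesis is needed only for the paper's route. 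Second, on what each approach buys: yours is shorter and self-contained given Proposition \ref{LTofB}, while the paper's isolates the exact obstruction to a finite mean in terms of the ladder L\'evy measures and is the form of the argument that would adapt beyond the spectrally positive case.
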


To conclude this section, we mention some smoothness properties of $B$. Rewrite \eqref{H} as
\begin{equation}\label{H2}
B(t)=e^{\psi_X(\alpha) t}
\int_0^t e^{-\psi_X(\alpha) s}Ee^{\alpha \Xbar_{s}}\rmd s.
\end{equation}
From this we see that $B$ has a density $B'$ satisfying
\begin{equation}\label{bd}
B'(t)=\psi_X(\alpha)B(t) +Ee^{\alpha \Xbar_t},\ t\ge 0.
\end{equation}
If $\psi_X(\alpha)=0$, it follows immediately that $B'(t)\to1$ as $t\to 0$ and so $B(t)$ increases approximately linearly near 0.  The same conclusion holds when $\psi_X(\alpha)\neq0$.  For this  first observe that from
\eqref{H2},
\begin{equation}\label{H2in}
\frac{e^{\psi_X(\alpha) t}-1}{\psi_X(\alpha)}\le
B(t)\le \frac{e^{\psi_X(\alpha) t}-1}{\psi_X(\alpha)}Ee^{\alpha \Xbar_t}.
\end{equation}
Hence by \eqref{bd},
\begin{equation}\label{bd2}
\frac{e^{\psi_X(\alpha) t}-1+Ee^{\alpha \Xbar_t}}{\psi_X(\alpha)}\le \frac{B'(t)}{\psi_X(\alpha)}\le \frac{e^{\psi_X(\alpha) t}Ee^{\alpha \Xbar_t}}{\psi_X(\alpha)}.
\end{equation}
Thus again $B'(t)$  tends to 1 as $t\to 0$.

\section{Tempered Stable Processes}\label{PCEP}

In this section we set out a  parametric class of tempered stable L\'evy processes which will be used  as the basis for later calculation and simulations.
All processes will be spectrally positive, i.e., have no downward jumps, but initially are not required to drift to $-\infty$, so we denote them by $Y$ to distinguish them from insurance risk processes which we will continue to denote by $X$.  $X$ will be obtained from $Y$ in Section~\ref{TSmodel} by subtracting a drift.

\subsection{Tempered stable processes}\label{sTSP}
Let $Y$ be a L\'evy process with characteristic triplet $(\gamma_Y, \sigma_Y^2, \Pi_Y)$, where
\begin{equation}\label{tempstabletriplet}
\gamma_Y= c(1-\rho)^{-1}-c\int_0^1(1-e^{-\alpha x})\frac {\rmd x}{x^\rho},\qquad
\sigma_Y^2=0, \qquad
 \Pi_Y(\rmd x)=\frac{ce^{-\alpha x}\ \rmd x}{x^{1+\rho}},\ \ x>0.
\end{equation}
Then $Y$ is a tempered stable process with parameters $c>0$, $\alpha >0$ and $\rho\in(0,1)\cup (1,2)$.
The choice of $\gamma_Y$ is made so that the cumulant  of $Y$ is
\begin{equation}
\label{psits}
\psi_Y(\theta)=\ln Ee^{\theta Y_1} =-c\Gamma(-\rho)[\alpha^\rho-(\alpha-\theta)^\rho], \quad \theta\le \alpha,
\end{equation}
where $\Gamma$ denotes the usual gamma function.
From this we find
\begin{equation}
\label{meanX}	
EY_1=\psi_Y'(0)= -c\rho\Gamma(-\rho)\alpha^{\rho-1}.
\end{equation}
$Y$ is a pure jump subordinator when $\rho\in(0,1)$ while for $\rho\in (1,2)$ it is spectrally positive but of unbounded variation.

\subsection{Inverse Gaussian processes}\label{sub:inverse_gaussian}

Suppose $Y$ has characteristics given by \eqref{tempstabletriplet} specialized by taking $\rho = 1/2$.  Then $Y$ is an \emph{inverse Gaussian process}.  It is a pure jump subordinator with cumulant
\begin{equation}\label{invg}
	\psi_Y(\theta) = 2c\sqrt{\pi}(\sqrt{\alpha} - \sqrt{\alpha-\theta}),\
	\theta\le \alpha,
\end{equation}
and mean
\begin{equation}\label{mean}
EY_1=c\sqrt{\frac{\pi}{\alpha}}.
\end{equation}

\section{Tempered Stable Insurance Risk Model}
\label{sec:parametrisation}

We turn now to insurance risk modelling based on a L\'evy process $X$, obtained from the tempered stable process $Y$ of Section~\ref{sTSP}, by subtracting a drift.  We will focus on the case $\rho\in (0,1)$.  We continue to require $X_t \to - \infty$ a.s.\ to reflect that the insurance company intends to collect sufficient premiums to avoid certain ruin.
A sufficient condition that ensures this is $E e^{\alpha X_1} \le 1$.
We note however that  $X_t \to - \infty$ a.s.\ may still hold when $E e^{\alpha X_1} > 1$.
This has some interesting consequences for insurance model parametrisation and safety loading management.

\subsection{Aggregate claims and the claims surplus model} \label{TSmodel}

Let $Y$ be the tempered stable process of Section~\ref{sTSP} with $\rho\in (0,1)$.  In that case $Y$ is a pure jump subordinator  which we can take to model the aggregate claims process.  We may then consider a one parameter family of claims surplus processes indexed by the premium rate $p$:
\begin{equation}\label{Xp}
X^{(p)}_t=Y_t-pt.
\end{equation}
We will use a superscript $p$ to denote quantities computed from $X^{(p)}$, for example its cumulant $\psi^{(p)}_X(\theta)=\psi_Y(\theta)-p\theta$.
By the strong law for L\'evy processes,
\begin{equation}\label {SLLN}
X^{(p)}_t\to -\infty\ \ {\rm a.s.} \iff\ \ p>EY_1.
\end{equation}
Note also that by \eqref{psits} and \eqref{meanX}
\begin{equation}\label{less1cond}
Ee^{\alpha X^{(p)}_1}< 1\ \iff\ \ \psi_Y(\alpha)< p\alpha\ \iff\ \  p> -c\Gamma(-\rho)\alpha^{\rho-1}\ \iff\ \  p>\frac{EY_1}{\rho},
\end{equation}
thus confirming that $Ee^{\alpha X_1^{(p)}}< 1$ implies $X^{(p)}_t\to -\infty$ a.s., via \eqref{SLLN}, since $\rho\in(0,1)$.

Taking $\rho = 1/2$ we obtain a general model where aggregate claims are modelled by an inverse Gaussian process. The use of the inverse Gaussian process in this context is discussed in Garrido and Morales \cite{GaMo}, Morales \cite{Mor} and  Chaubey, Garrido, and Trudeau \cite{CGT} among others. The choice $\rho = 1/2$ makes a number of computations easier. For example, it is a tedious but simple matter to confirm that $\Phi^{(p)}_X$, defined in \eqref{Phi}, is given by
\begin{equation*}
\Phi_X^{(p)}(\delta)=\frac{2\pi c^2 +2\sqrt{\pi} c\left(\sqrt{(\sqrt{\alpha}p-\sqrt{\pi}c)^2+\delta p}-\sqrt{\alpha}p\right)+\delta p}{-p^2}.
\end{equation*}

\subsection{Safety loading}
\label{sub:safety_loading}

In the Cram\'er-Lundberg model \eqref{CL}, if we write
\begin{equation}\label{sl}
p=(1+\xi)\gl\mu,
\end{equation}
then $\xi$ is called the safety loading and, in practice, its value is typically of order $0.2$ \cite{grandell}.

For the tempered stable model of Section~\ref{TSmodel}, the natural interpretation of the safety loading is to write
\begin{equation}\label{sl01}
p=(1+\xi)EY_1.
\end{equation}
Thus by \eqref{less1cond},
\begin{equation}\label{rhogt}
\psi^{(p)}_X(\alpha)<0 \ \iff\ \ \ \rho>\frac 1{1+\xi} \ \iff\ \ \ \xi>\frac{1-\rho}{\rho}.
\end{equation}
It is interesting to note that this condition imposes no restrictions on the parameters $c$ and $\alpha$.  This will be further elaborated on in the next section.

It has been suggested that the inverse Gaussian process be used as a model for aggregate claims.  However when $\rho=1/2$, $\xi=0.2$ and $p$ is given by $\eqref{sl01}$, we are forced by \eqref{rhogt} to consider the situation in which $\psi^{(p)}_X(\alpha)>0$.  Indeed this will be the case for any safety loading $\xi<1$.  Thus to obtain a model with a realistic safety loading and have $\psi^{(p)}_X(\alpha)\le 0$, which,  from Section~\ref{RGLT} is necessary to prevent long term exponential growth of the finite time ruin probabilities, the inverse Gaussian process cannot be used.  Instead, from \eqref{rhogt}, for any $\xi>0$, to ensure that $\psi^{(p)}_X(\alpha)\le 0$, where $p$ is given by \eqref{sl01}, one should take a tempered stable process with $\rho\in([1+\xi)^{-1},1)$.  We will thus focus our numerical investigation of the ruin time estimates in \eqref{rft} and \eqref{tulta} on processes satisfying this condition.

This potentially undesirable aspect of the inverse Gaussian process results from the asymptotic (in $t$) behaviour of the asymptotic (in $u$) estimate \eqref{H}, together with the above safety loading considerations. For small values of the initial reserve or over short time periods, exponential growth in the finite time ruin probabilities may not be exhibited.  In this case the inverse Gaussian process with a safety loading of $0.2$ may prove to be an adequate model.
Note however that estimate \eqref{tulta} is not available in this case since $\psi^{(p)}_X(\alpha)>0$.

\subsection{Interpretation of model parameters}
\label{sub:Interp_paras}

We next discuss the role of the various parameters in these models. Fix $a,b>0$ and let $R_t=bY_{at}$.  Then $R$ represents the same aggregate claims process but with different units of currency and time. For example if $Y_t$ is the aggregate claims after $t$ years measured in millions of dollars, and $a=1/4$ and $b=10^{3}$, then $R_t$ is the aggregate claims after $t$ quarters measure in thousands of dollars.  The L\'evy measure of $R$ is
\begin{equation}\label{LMY}
\Pi_R(dx)=\frac{ab^{\rho}ce^{-(\alpha/b)x}\ dx}{x^{1+\rho}},\ \ x>0.
\end{equation}
This is of the same form as $\Pi_Y$ but with different values for the parameters $c$ and $\alpha$.  Thus varying $c$ and $\alpha$ is equivalent to changing the currency and time scale. This is not  the case for  $\rho$, though.  Similarly if we write $p=(1+\xi)EY_1 $, so that $X^{(p)}_t= Y_t- (1+\xi)EY_t$, then
\begin{equation}\label{LMY1}
bX^{(p)}_{at}=R_t- (1+\xi)ER_t,
\end{equation}
and we see that the safety loading also does not depend on the units of currency or time.  Thus, up to a change of scale, the key parameters to vary to obtain different models are $\rho$ and $\xi$.

\section{Numerical Approximation}\label{sec:numerical_approximation}

This section introduces the techniques to be used in numerically approximating the  ruin time distribution, when $X$ is spectrally positive, using \eqref{rft} and \eqref{tulta}.

\subsection{Approximating $B$}\label{}

Expressions for the Laplace transform of $B$ are given in Proposition~\ref{LTofB}, from which $B(t)$ can in principle be evaluated using the Bromwich integral
\begin{equation}
	\label{Bromwich}
	B(t) = \frac{1}{2\pi \rmi} \int_{\epsilon - \rmi \infty}^{\epsilon + \rmi \infty} e^{\gd t} \LT{B}(\gd)\,d\gd,
\end{equation}
where $\epsilon$ is chosen so that $\LT{B}$ in \eqref{HLT} is analytic in the region $\Re(\gd) \ge \epsilon$. An extensive coverage of methods for approximating integrals of this form is presented by Cohen in \cite{Cohen}; in particular, benchmarks comparing the relative errors and computational times for a number of the methods are in \cite{Cohen}.  As a check, we used  two different approaches for the calculation of \eqref{Bromwich}.

\subsubsection*{Fixed-Talbot method}

 The first consists of a straightforward approach by Valk\'o and Abate  \cite{AbateValko05} which relies on multi-precision arithmetic called the fixed-Talbot method; see \cite{Cohen}, p.138.
This method is very attractive from an implementation point of view if one has access to a software package with arbitrary precision arithmetic capabilities (e.g., \emph{Mathematica} \cite{Math}). The fixed-Talbot approach consists of deforming the contour in \eqref{Bromwich} to the path $\gd(\theta) = r \theta (\cot(\theta) + \rmi)$ for $-\pi < \theta < \pi$, where $r$ is a parameter. Integration over this new contour gives
\begin{equation*}
	B(t) = \frac{1}{2 \pi \rmi} \int_{-\pi}^\pi e^{t \gd(\theta)} \LT{B}(\gd(\theta)) \gd'(\theta)\,d\theta.
\end{equation*}
Substituting $\gd'(\theta) = \rmi r (1+\rmi \sigma(\theta))$ with $\sigma(\theta) := \theta +(\theta \cot (\theta) - 1) \cot(\theta)$ and knowing that $B$ is real-valued, we obtain
\begin{equation*}
	B(t) = \frac{r}{\pi}\int_0^\pi \Re\left( e^{t \gd(\theta)} \LT{B}(\gd(\theta))(1+\rmi \sigma(\theta)) \right)\,d\theta.
\end{equation*}
This integral is then approximated using a trapezoidal rule with step size $\pi/M$ and $\theta_j = j \pi/M$ by
\begin{equation*}
B^M(t) = \frac{r}{M}\left(\tfrac12\LT{B}(r) e^{rt} + \frac{r}{M} \sum_{j=0}^{M-1} \Re( (1+\rmi \sigma(\theta_j)) \LT{B}(\gd(\theta_j)) e^{\gd(\theta_j) t})\right).
\end{equation*}
As suggested by Valk\'o and Abate, we choose $r=2M/(5t)$ where $M$ is the number of decimal digits of precision required.

\subsubsection*{Levin method}

The second method provides an alternative for situations where arbitrary precision arithmetic is not available. It starts from the observation that $B(t) = 0$ for $t < 0$ and $B$ is real-valued, so \eqref{Bromwich} simplifies to
\begin{equation}
	\label{BromwichReal}
	B(t) = \frac{2 e^{\epsilon t}}{ \pi} \int_0^\infty \Re(\LT{B}(\epsilon + \rmi u))\cos(u t)\,du.
\end{equation}
This is an integral of oscillatory type so care has to be taken in performing a numerical approximation. We specialised the approach of Levin \cite{Levin82} to our particular case \eqref{BromwichReal},  as follows.

 For $t > 0$ and a large enough $M > 0$ we can approximate \eqref{BromwichReal} by
\begin{equation*}
	B(t) \approx \frac{2}{ \pi} \int_0^M f(u) \cos(tu)\,du
\end{equation*}
where  $f(u):= \Re \LT{B}(\rmi u)$.
Assume $f(u)$ to be of the form
\begin{equation}
	\label{LevinODE}
	f(u) = F'(u) - t F(u) \tan(t u), \quad 0 \le u \le M,
\end{equation}
for some function $F$. Then it follows that
\begin{equation}\label{whatfor}
\begin{aligned}
	 \int_0^M f(u) \cos(tu)\,du
	&=  \int_0^M \left(F'(u)\cos(tu) - t F(u) \sin(tu)\right)\,du\\
	&=  \int_0^M \frac{d}{du}\left( F(u) \cos(tu)\right)\,du\\
	&= F(M) \cos(tM) - F(0).
\end{aligned}
\end{equation}
To find this unknown function $F$, we consider relation \eqref{LevinODE} as an ordinary differential equation (ODE) where $f$ is given and $F$ is to be determined. Since a solution of \eqref{LevinODE} seems difficult to obtain in closed form, we solve the ODE using a numerical method. Assume  there exists $n \in \N$ large enough so that $F$ can be approximated arbitrarily closely on the interval $[0,M]$ by a function $F_n$ given by a linear combination of $n$ predetermined basis functions $p_1(u), \ldots, p_n(u)$:
\begin{equation}\label{F}
F_n(u) = \sum_{k=1}^n c_kp_k(u),
\end{equation}
where $c_1, \ldots, c_n$ are $n$ unknown coefficients to be determined. For simplicity, we made the choice $p_k(u) = T_k(u)$, where $T_k(u)$ is the $k$-th Chebyshev polynomial of the first kind (i.e., $T_k$ satisfies $T_k(\cos(u)) = \cos(k u)$).
Substituting \eqref{F} into \eqref{LevinODE} and using the identity $T_k'(u) = k U_{k-1}(u)$,  where $U_k(u)$ is the Chebyshev polynomial of the second kind, we obtain the following equation
\begin{equation}
\label{system}
f(u) = \sum_{k=1}^n c_k \left(k U_{k-1}(u) - t T_k(u) \tan(tu)\right).
\end{equation}
To find the coefficients $c_1, \ldots, c_n$, we choose $n$ collocation nodes $0=u_1< u_2< \cdots< u_n=M$ and evaluate \eqref{system} at these points in order to set up a system of $n$ equations with $n$ unknowns which can be solved for the $c_i$.  Once these coefficients are obtained, and since  $T_k(0) = \cos(k \pi/2)$,
we have the approximation
\begin{equation}\label{Bapp}
	B(t) \approx \frac{2}{\pi} \left(\sum_{k=1}^n c_k (T_k(M) \cos(t M)- \cos(k \pi/2)) \right).
\end{equation}
Here we see that the approximation depends on good choices of the cut-off value $M > 0$, the number $n$ of basis functions used to approximate $F$, and the location of the collocation nodes $u_1, \ldots, u_n$. After inspection of the behaviour of $\LT{B}$ near zero (see for example Figure~\ref{fig4}), we chose $M=n$ and set the location of the nodes at $u_i = \cot(i\pi/(2n))$, so they accumulate near zero.

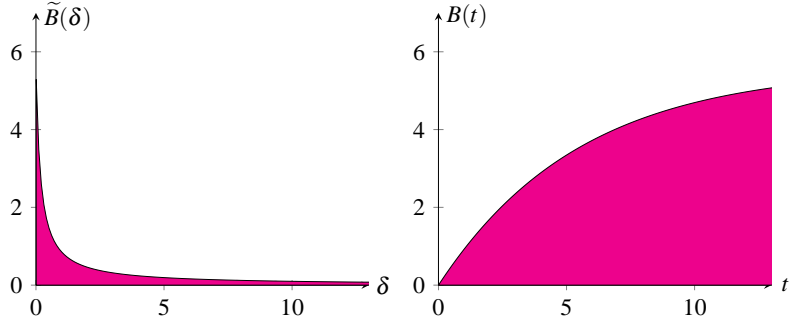
\begin{figure}
\begin{center}
\begin{tikzpicture}[scale=0.9]
\begin{axis}[standard, xmin=0, xmax=13, ymin=0, ymax=7, xlabel=$\delta$, ylabel={$\widetilde B(\delta)$}]
\addplot[filled] table {fig1a.tsv} \closedcycle;
\end{axis}	
\end{tikzpicture}
\begin{tikzpicture}[scale=0.9]
\begin{axis}[standard,xmin=0, xmax=13, ymin=0, ymax=7, xlabel=$t$, ylabel=$B(t)$]
\addplot[filled] table {fig1b.tsv} \closedcycle;
\end{axis}	
\end{tikzpicture}
\end{center}
\caption{\it $\LT B(\gd)$ from \eqref{HLT} and the corresponding $B(t)$ for $t >0$, by numerical inverse Laplace transform. Generated from the tempered stable process $X^{(p)}$ in \eqref{Xp} with $Y$ having triplet \eqref{tempstabletriplet}, where
$\rho=0.99$, $\alpha=1.0$, $c=0.01$, and $\xi=0.2$.}
\label{fig4}
\end{figure}

\subsection{Approximating $\pibar_X^+(u)$ and $P(\tu<\infty)$}\label{u term}

Approximating $\pibar_X^+(u)$ for an arbitrary choice of $u>0$ is straightforward. For example, if $X$ is a tempered stable process then
\begin{equation}
	\label{tail1}
	\overline{\Pi}_X^+(u) = \int_u^\infty \frac{c e^{-\alpha x}}{x^{1+\rho}}\,\rmd x, \quad u > 0
\end{equation}
(see  \eqref{tempstabletriplet}).
The integral here is easily calculated using a numerical quadrature routine, or a simple asymptotic approximation may be adequate for a large enough $u$:
\begin{equation}\label{piasy}
\overline{\Pi}^+_{X}(u)=\frac{c e^{-\alpha u}}{\alpha u^{1+\rho}}(1+o(1))\quad \text{ as }u\to\infty.
\end{equation}

For the models we consider,  $\overline{\Pi}_X^+(u)$ has a singularity at 0,
so it could be expected that the estimate for $P(\tau(u)\le t)$ given by \eqref{rft} is large for small $u$. This suggests that \eqref{tulta} may provide better estimates, at least for small $u$, when applicable.
When $X$ is spectrally positive, it is well-known that the infinite horizon ruin probabilities $P(\tau(u)<\infty)$ can be approximated arbitrarily closely by numerically inverting a Laplace transform; specifically, if $EX_1<0$, then from Theorem 8.1 of \cite{kypbook} we have that
\begin{equation}\label{ktau}
P(\tau(u)<\infty)=1+EX_1W(u)
\end{equation}
where the \emph{scale function} $W$ satisfies
\begin{equation}\label{W}
\int_0^\infty e^{-\gb u}W(u)du=\frac 1{\psi_X(-\gb)}.
\end{equation}
 Recently, an extensive overview of scale functions and their numerical evaluation through the inversion of a Laplace transform has been presented by Kuznetsov, Kyprianou, and Rivero \cite{KKR}.

\section{Simulation Methodology}\label{sim}

Simulation of observations on random variables with tempered stable distributions and the resulting processes has recently become an active topic of research due to their use in a variety of different applications; see the recent survey by Kawai and Masuda \cite{KawaiMasuda11} and the references therein. However, to the best of our knowledge, simulation of finite-time ruin probabilities for tempered stable processes has not yet been covered in the literature. Consequently, we provide some detail on our approaches.

\subsubsection*{Naive approach} A naive approach is tempting; simulate the tempered stable process increment by increment by sampling from the increment distribution and tally the number of paths that pass above level $u$ by time $t$, then calculate the ratio of crossing paths to total paths simulated. When the stability index $\rho < 1$, the law of the process increments can be simulated exactly. On the other hand, when $\rho > 1$ no practical exact simulation method exists and one must resort to an approximation \cite{KawaiMasuda11}. This is particularly troublesome when the probability of ruin is very small due to the possibility of bias in the convergence of the simulation algorithm. In addition, as the simulation of tempered stable random variables is currently not built-in to standard software packages, a custom implementation is needed. Further, simulation of a tempered stable random variable attracts a computational cost greater than that of a stable random variable. These disadvantages motivated us to develop another approach that we shall now explain.

\subsubsection*{Measure change approach}

There is a useful relationship between tempered stable and stable processes that we can exploit.  Consider a spectrally positive stable process $Z = \{Z_t: t \ge 0\}$ of index $\rho\in(0,1)\cup (1,2)$ with characteristic triplet
$(\gamma_Z, 0, \Pi_Z)$, where
\begin{equation}\label{stabtrip}
\gamma_Z = \frac{c}{1-\rho}, \qquad \Pi_Z(\rmd x)=\frac{c\ \rmd x}{x^{1+\rho}},\ \ \ x>0, c>0.
\end{equation}
 It is easily shown, by integration by parts, that for
$\rho\in(0,1)\cup (1,2)$, $\lambda >0$,
\begin{equation}\label{71}
\int_0^\infty(e^{-\lambda x}-1+
\lambda x \mathbf{1}_{\{0<x<1\}})\Pi_Z(\rmd x)=c \lambda ^\rho \Gamma(-\rho)+\frac{c\gl}{1-\rho}.
\end{equation}
Hence the Laplace exponent of $Z_1$ satisfies
\begin{equation*}
\psi_Z(-\lambda ) = \log Ee^{-\lambda  Z_1}= c \Gamma(-\rho)\lambda ^\rho, \quad \lambda  >0.
\end{equation*}
When $\rho\in(0,1)$, the process $Z$ is a pure jump subordinator  while for $\rho\in (1,2)$ it is spectrally positive with finite mean but of unbounded variation.

Let
$\ Z_t^{(p)}=Z_t-pt.$
The characteristics of $X^{(p)}$ and $Z^{(p)}$ are $(\gamma_Y-p, 0, \Pi_Y)$ and $(\gamma_Z-p, 0, \Pi_Z)$
respectively.
Now $Z^{(p)}$ may be obtained from $X^{(p)}$ by an exponential change of measure, i.e. $Z^{(p)}$ is an Esscher transform of $X^{(p)}$. Specifically
assume $X^{(p)}$ and $Z^{(p)}$ are given on a filtered probability space $(\Omega, \sF, (\sF_t)_{t \ge 0}, P)$ and define
 \begin{equation}\label{ET}
 \frac{dQ}{dP}=e^{\alpha X^{(p)}_t-\psi_X^{(p)}(\alpha)t}\ \  \text{ on } \sF_t.
 \end{equation}
 Then $X^{(p)}$ under $Q$ has the same characteristics as $Z^{(p)}$ ; see Theorem 2 of VII.3c in \cite{S}. Rewriting \eqref{ET} as
 \begin{equation}
 dP=e^{-\alpha X^{(p)}_t+\psi_X^{(p)}(\alpha)t} dQ
 \ \  \text{ on } \sF_t
 \end{equation}
we find
 \begin{equation}\begin{aligned}\label{est}
 P(\tau_{X^{(p)}}(u)\le t)&=E_Q(e^{-\alpha X^{(p)}_t+\psi_X^{(p)}(\alpha)t};\tau_{X^{(p)}}(u)\le t)\\
 &=E(e^{-\alpha Z^{(p)}_t};\tau_{Z^{(p)}}(u)\le t) e^{\psi_X^{(p)}(\alpha)t}\\
 &=E(e^{-\alpha Z^{(p)}_t};\tau_{Z^{(p)}}(u)\le t) e^{-(c\Gamma(-\rho)\alpha^\rho+p\alpha)t}
 \end{aligned}\end{equation}
 Thus to calculate ruin probabilities for $X^{(p)}$, we need only simulate the stable process $Z^{(p)}$. This has a number of advantages. First, numerical packages that simulate stable random variables are readily available. Second, the simulation of an increment of a stable process is less computationally expensive than simulating a tempered stable process. And third, the law can be exactly sampled in the case $\rho > 1$.

\subsubsection*{Implementation details}

We construct the stable process $Z^{(p)}$ from a samples of a stable random variable, $S$, as follows. Suppose available a numerical package for simulating a general stable random variable $S\sim \STABLE(\rho, \gb, \mu, \nu)$ with index $\rho \in (0,1) \cup (1,2)$, skewness parameter $\beta$, location parameter $\mu$, and scale parameter $\nu$, having
characteristic exponent expressed in the form (e.g., \cite[Equation (14.24)]{sato})
\begin{equation}\label{CFstablerv}
\Psi_S(\theta) = -\nu |\theta|^\rho \left(1-\rmi \beta \sgn\theta \tan \frac{\pi \rho}{2}\right) + \rmi \mu \theta.
\end{equation}
Then (cf., e.g., proof of \cite[Theorem 14.10]{sato}) the law of $Z^{(p)}_h$, for $h>0$, is obtained from the law of $S$ by setting
\begin{equation}
\beta = 1, \ \mu = -ph,\ \nu = -ch \cos(\pi \rho/2) \Gamma(-\rho).
\end{equation}
We simulate a path $t \mapsto Z_t^{(p)}=Z_t-pt$ by decomposing the path  as a sum of increments over small time intervals $h > 0$;
see Algorithm~\ref{TS2}.

\begin{algorithm}[ht!]
\caption{$P(\tau(u)\le t)$ using time increment $h > 0$ and $n$ simulations} \label{TS2}
\begin{algorithmic}[1]
\State $\text{sum} \gets 0$
\State $\mu \gets -ph$
\State $\nu \gets (-h c \cos(\pi \rho/2) \Gamma(-\rho))^{1/\rho}$
\For{$i = 1, \ldots, n$}
\State  $s \gets 0$, $X \gets 0$
\State $\text{hit} \gets \false$
\While{$s < t$}
\State $dX \sim \STABLE(\rho, 1, \mu, \nu)$
\State $s \gets s + h$
\State $X \gets X + dX$
\If{$X > u$}
\State $\text{hit} \gets \true$
\EndIf
\EndWhile
\If{hit}
\State $\text{sum} = \text{sum} + e^{-\alpha X}$
\EndIf
\EndFor
\State \RETURN\, $(\text{sum} / n) \exp(-(c\,\Gamma(-\rho)\alpha^\rho+p\alpha)t)$

\end{algorithmic}
\end{algorithm}

To get a good approximation of the ruin probability for a given $(u,t)$ when $P(\tau(u) \le t)$ is small, a large number of simulations $n$ and a small time step $h>0$ are needed.  To speed up the simulations, we implemented Algorithm~\ref{TS2} in \texttt{C++11} and modified the algorithm slightly to run in parallel using \texttt{OpenMP}. In the parallelized version of our algorithm, each thread received its own copy of a random number generator (\texttt{mt19937}: Mersenne twister) initialised with independent initial seed. Since the stable distribution is not part of \texttt{C++11} we wrote our own implementation based on an acceptance-rejection method that requires only uniform variates and exponential variates (both available in \texttt{C++11}), Zolotarev's function, and the function $\sinc(x) := \sin(x)/x$ \cite{Devroye09}. This implementation takes the random number generator as an argument so that the generation of variates is independent across threads. The \texttt{for} loop at line $4$ in Algorithm~\ref{TS2} is distributed over the threads using an OpenMP parallel pragma and the shared variable \texttt{sum} is set to be reduced using the addition operator.

In Table~\ref{tab1} and Table~\ref{tab2}, we simulated $P(\tau(u) \le t)$ at $u=0.1$ and $t=2.0$ using the naive and the measure changed algorithm with step sizes of $h=0.0001$ and $h=0.01$, respectively. We also fixed $\rho=0.99$, $c=0.01$, $\alpha = 1$ and $\xi = 0.2$. The tables contain the time\footnote{On an Intel Xeon W3680 @ 3.33Ghz using 12 threads.} in seconds taken to simulate $P(\tau(u) \le t)$ using $n$ sample paths, the mean value of $P(\tau(u) < t)$ calculated using $N=30$ batches of $n$ sample paths, and $\sigma/\sqrt{N}$, where $\sigma$ is the standard deviation of $P(\tau(u) \le t)$ over the $N$ batches.

\begin{table}[h]
\footnotesize
\begin{center}
\begin{tabular}{r|lll|lll}
\hline
 & \multicolumn{3}{|c|}{Naive} & \multicolumn{3}{|c}{Measure Change}\\
$n$ & Time (s) & Mean & $\sigma/\sqrt{N}$ & Time (s) & Mean & $\sigma/\sqrt{N}$ \\
\hline
  32 & 0.079217 & 0.051041667 & 0.0085395638 & 0.076420 &
   0.057849267 & 0.0064226185 \\
 64 & 0.153592 & 0.0484375 & 0.0056237028 & 0.141648 & 0.0543778
   & 0.0038164514 \\
 128 & 0.280040 & 0.054947867 & 0.0048331632 & 0.264577 &
   0.052818533 & 0.0031319083 \\
 256 & 0.558673 & 0.051822867 & 0.0027775502 & 0.526003 &
   0.052552067 & 0.0022664163 \\
 512 & 1.080112 & 0.052604233 & 0.0016828511 & 1.040853 &
   0.052797033 & 0.001440741 \\
 1024 & 2.173619 & 0.051692733 & 0.0011881599 & 2.240990 &
   0.051077267 & 0.00091771839 \\
 2048 & 4.281521 & 0.049755767 & 0.0010048818 & 4.147853 &
   0.050568033 & 0.00064971641 \\
 4096 & 8.624235 & 0.0508301 & 0.00068509617 & 8.315087 &
   0.050364233 & 0.00050435188 \\
 8192 & 17.148903 & 0.0512859 & 0.00049187434 & 16.571840 &
   0.050474467 & 0.00036946037 \\
 16384 & 34.222089 & 0.0512817 & 0.00032080185 & 33.201601 &
   0.050540433 & 0.00029116668 \\
\end{tabular}
\smallskip
\caption{\it Mean of $P(\tau(u) \le t)$ calculated using the two approaches, calculated using $N=30$ batches of $n$ paths and a time step of $h=0.0001$. }\label{tab1}
\end{center}
\end{table}

\begin{table}[h]
\footnotesize
\begin{center}
\begin{tabular}{c|lll|lll}
\hline
 & \multicolumn{3}{|c|}{Naive} & \multicolumn{3}{|c}{Measure Change}\\
$n$ & Time (s) & Mean & $\sigma/\sqrt{N}$ & Time (s) & Mean & $\sigma/\sqrt{N}$ \\
\hline
 32 & 0.004348 & 0.046875 & 0.0069877124 & 0.004592 &
   0.046198033 & 0.0057320205 \\
 64 & 0.005102 & 0.044270833 & 0.0057095406 & 0.004866 &
   0.046170833 & 0.0039138265 \\
 128 & 0.007268 & 0.045833433 & 0.0039947614 & 0.006323 &
   0.047625433 & 0.0028581633 \\
 256 & 0.009335 & 0.046354167 & 0.0021084348 & 0.009371 &
   0.047685367 & 0.0017811631 \\
 512 & 0.015461 & 0.049869767 & 0.0014499284 & 0.014557 &
   0.0495607 & 0.0013563884 \\
 1024 & 0.029704 & 0.0511068 & 0.0011481938 & 0.024450 &
   0.0504672 & 0.00093407293 \\
 2048 & 0.047680 & 0.050992767 & 0.00093047734 & 0.044822 &
   0.050154467 & 0.00067853448 \\
 4096 & 0.090405 & 0.050187133 & 0.00061191755 & 0.093858 &
   0.050157833 & 0.00042720285 \\
 8192 & 0.178851 & 0.050354 & 0.00035644764 & 0.167700 &
   0.050329367 & 0.0003197472 \\
 16384 & 0.344298 & 0.050529 & 0.00026127832 & 0.329710 &
   0.050477933 & 0.00029453508 \\
\hline
\end{tabular}
\smallskip
\caption{\it  Mean of $P(\tau(u) \le t)$ calculated using the two approaches, calculated using $N=30$ batches of $n$ paths and a time step of $h=0.01$.}\label{tab2}
\end{center}
\end{table}

The measure change algorithm performs better than the naive algorithm as it results in a smaller confidence interval (i.e., $\sigma/\sqrt{N}$ is smaller) and shorter run times for the parameter values we are interested in. In some cases, the naive approach can give shorter run times.\footnote{This can occur, for example, when $t$ is large, as the naive approach does not need to simulate the full path up to time $t$ if the process passes above $u$ at some earlier time.} The main advantages of the measure change approach are the ability to accurately simulate the case $\rho > 1$ and the simplicity of implementation as it does not require the simulation of tempered stable random variables.

\section{Comparison of Asymptotic and Simulation Estimates}\label{res}

In this section, we report on the estimates of the ruin time distribution obtained from the asymptotic formula \eqref{rft}, and from the modified version \eqref{tulta}, and compare them to the values obtained from the Monte Carlo simulation as described in Section~\ref{sim}.

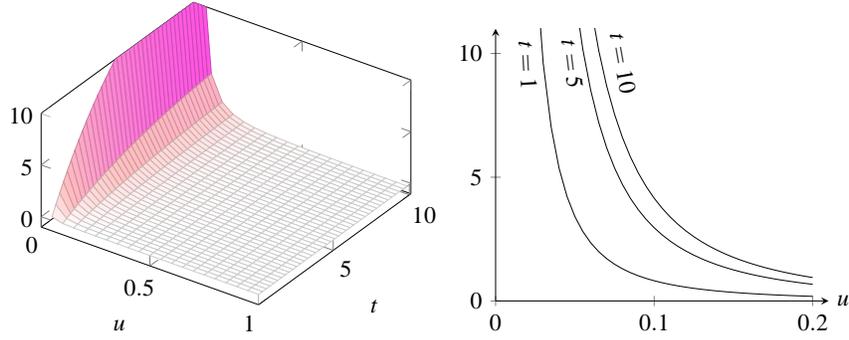
\begin{figure}[h]
\begin{tikzpicture}[scale=0.9]
\begin{axis}[xlabel=$u$, ymax=10.1, ylabel=$t$, xmin=0, zmax=10.0,view={35}{50},unbounded coords=jump]
\addplot3[surf,mesh/ordering=y varies,mesh/rows=40,colormap name=whitehot] file {fig3.tsv};
\end{axis}
\end{tikzpicture}
\begin{tikzpicture}[scale=0.9]
        \begin{axis}[standard, xlabel=$u$, ymin=0, ymax=11, xmin=0,xmax=0.21,xtick={0,0.1,0.2}]
\addplot[clean] table[y index=1] {fig3b.tsv};
\node[above,rotate=-87] at (axis cs:0.01,9.5) {$t=1$};
\addplot[clean] table[y index=2] {fig3b.tsv};
\node[above,rotate=-83] at (axis cs:0.038,9.5) {$t=5$};
\addplot[clean] table[y index=3] {fig3b.tsv};
\node[above, rotate=-80] at (axis cs:0.07,9.5) {$t=10$};
\end{axis}	
\end{tikzpicture}
\caption{\it The value of $\overline{\Pi}_X^+(u) B(t)$ for various $(u,t)$ [left] and for $t=1,5,10$ [right]. Note that values greater than 1 occur. Model and parameter values as in Figure~\ref{fig4}. }
\label{fig5}
\end{figure}

As shown in Figure~\ref{fig5}, it may happen that the leading term in formula \eqref{rft} overestimates the value of $P(\tau(u)\le t)$ for small $u$. Direct application of the asymptotic estimate \eqref{rft} can give putative  values of $P(\tau(u)\le t)$ greater than 1 when $u$ is small. This, of course, is a result of the singularity in $\overline{\Pi}_X^+(u)$ at $u=0$.  However, this is not an issue with the modified estimate \eqref{tulta}, which provides meaningful estimates even for very small $u$.

The value of $B(\infty)$ in \eqref{tulta} can be calculated by combining \eqref{Bn} and \eqref{kapphipsi2}, together with the observation that
$q=|EX^{(p)}_1|$.  This  yields
\begin{equation}
B(\infty)=\frac{\alpha|EX^{(p)}_1|}{(\psi_X^{(p)}(\alpha))^2}.
\end{equation}
Evaluation of $P(\tu<\infty)$ is through \eqref{ktau} and \eqref{W} using the algorithms for numerically inverting Laplace transforms discussed in Section \ref{}; see Figure~\ref{fig7}.
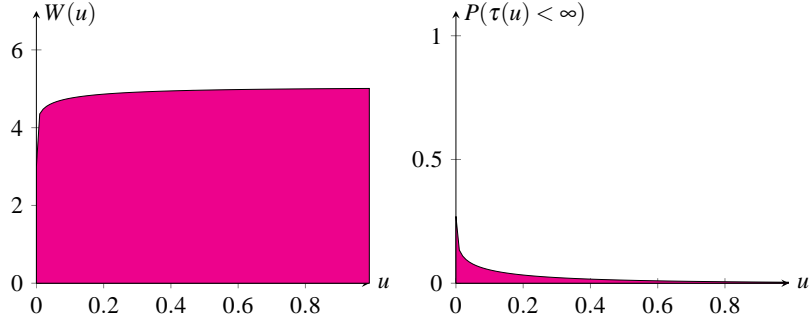
\begin{figure}[h]
\begin{center}
\begin{tikzpicture}[scale=0.9]
\begin{axis}[standard, xmin=0, xmax=0.99, ymin=0, ymax=7, xlabel=$u$, ylabel={$W(u)$}]
\addplot[filled] table {fig4.tsv} \closedcycle;
\end{axis}	
\end{tikzpicture}
\begin{tikzpicture}[scale=0.9]
	\begin{axis}[standard, xmin=0, xmax=0.99, ymin=0, ymax=1.1, xlabel=$u$, ylabel={$P(\tau(u)<\infty)$}]
\addplot[filled] table {fig4b.tsv} \closedcycle;
\end{axis}	
\end{tikzpicture}
\end{center}
\caption
{\it The scale function $u \mapsto W(u)$ in \eqref{W} [left] and $u \mapsto P(\tau(u)<\infty)$ in \eqref{ktau} [right]. Model and parameter values as in Figure~\ref{fig4}.}
\label{fig7}
\end{figure}
Substituting the resulting values into \eqref{tulta} leads to a substantially improved estimate.  To benchmark the estimate \eqref{tulta}, we compared it against the Monte Carlo simulation. Parameter values were chosen so that the resulting model reasonably well approximates practice.   In this regard, we followed Grandell \cite{G} p.145 who cites working actuaries as regarding a safety loading of $0.2$, an initial reserve equal to the expected aggregate claims over a one year time period, and a planning horizon of five years, to be practical.
For added flexibility,  we allowed for an increased  planning horizon of up ten years.

As discussed in Section~\ref{sub:Interp_paras}, the key parameters in the tempered stable models are $\rho$ and $\xi$.  The parameters $c$ and $\alpha$ in  \eqref{tempstabletriplet} correspond to changes of scale. In the scenario illustrated in Figure~\ref{ffig4}, we took a safety loading of $\xi=0.2$.  In order that \eqref{rhogt} hold we then must choose $\rho\in(5/6,1)$.  After some experimentation we found the asymptotic estimate performs better for values of $\rho$ close to 1.  We took $\rho=.99$.  The values of $c$ and $\alpha$ may then be chosen to set a convenient scale in $t$ and $u$. For example in Figure~\ref{ffig4} we took $c=.01$ and $\alpha=1$.  We assume one time unit corresponds to 6 months.  Then the expected aggregate claims over one year is $EY_2=1.9886$.  Thus we plot the ruin probabilities for $0\le t\le 20$ and $0\le u\le 2$.$^5$
\footnote{To illustrate the use of $c$ and $\alpha$ in  setting the scale, if we took $c=(.01)2^{.01}$ and $\alpha=2$ then the plots in Figure~\ref{ffig4} would change only in the scale on the horizontal axes, which would become $0\le t\le 10$ and $0\le u\le 1$.  From Section~\ref{sub:Interp_paras} this corresponds to ruin probabilities for the new process $R_t= X^{(p)}_{2t}/2$ where $X^{(p)}$ is the original process in Figure~\ref{ffig4} with $c=.01$ and $\alpha=1$.  In this case $t$ would now be measured in years, and the expected annual claims would be $ER_1=EY_2/2=0.9943$.}

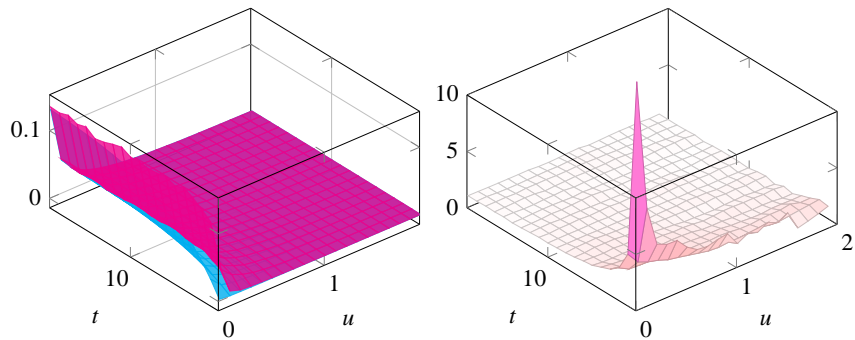
\begin{figure}[h]
\begin{tikzpicture}[scale=0.9]
\begin{axis}[3d box,grid=major,xlabel=$u$,ylabel=$t$,xmin=0,xmax=1.9,view={-40}{50}]
\addplot3[surf,mesh/ordering=y varies,mesh/rows=20,cyan, opacity=0.8, faceted color=cyan] file {fig6b.tsv};
\addplot3[surf,mesh/ordering=y varies,mesh/rows=20,magenta, opacity=0.8, faceted color=magenta] file {fig6a.tsv};
\end{axis}
\end{tikzpicture}
\begin{tikzpicture}[scale=0.9]
\begin{axis}[clip=true,grid=minor,3d box,xlabel=$u$,ylabel=$t$,xmin=0,xmax=2,zmin=0,zmax=10,view={-40}{50}, xtick={0,1,2},ytick={0,10,20}]
\addplot3[surf,mesh/ordering=y varies,mesh/rows=20,colormap name=whitehot] file {fig6c.tsv};
\end{axis}
\end{tikzpicture}
\caption{\it $P(\tau(u) < t)$ for Monte Carlo (magenta) and asymptotic formula (cyan) [left] and the ratio of the two [right].  Model and parameter values as in Figure~\ref{fig4}.}\label{ffig4}
\end{figure}

As shown in Figure~\ref{ffig4}, the estimate \eqref{tulta} performs very well in this case when $t$ is greater than 10 (corresponding to 5 years) and $u$ is greater than $1$ (corresponding to half the expected annual aggregate claims). As Table~\ref{tab3}
indicates, when $u=2$ and $t\ge10$ the relative errors are less than $8.2\%$ and decrease to below $2\%$ by the time $t$ reaches $20$.  Even for $u=1$ the relative error is less than $14\%$ for $t\ge10$.
The variability in the simulated probabilities observed in Table~\ref{tab3} arises because the probability being estimated is very small.  This accords with an insurance company's desire to set its ruin probability over the planning period to be negligible.  In this example it is of order $10^{-3}$ which seems reasonable.

\begin{table}[h]
\footnotesize
\begin{center}
\begin{tabular}{lllllllll}
\hline
 $u$ & $t$ & $a$ & $s$ & $i$ & $a/s$ & $i/s$ & $|a-s|/s$ & $|i-s|/s$ \\
\hline
 1 & 10 & 0.00330802 & 0.003835 & 0.00393118 & 0.862586 & 1.02508 & 0.137414& 0.0250804
   \\
 1 & 12 & 0.00350162 & 0.003829 & 0.00393118 & 0.914499 & 1.02669 & 0.0855013 & 0.0266867 \\
 1 & 14 & 0.00363522 & 0.003872 & 0.00393118 & 0.938849 & 1.01528 & 0.0061151 & 0.0152849 \\
 1 & 16 & 0.00372736 & 0.003725 & 0.00393118 & 1.00063 & 1.05535 & 0.000634196 & 0.0553512 \\
 1 & 18 & 0.00379087 & 0.003704 & 0.00393118 & 1.02345 & 1.06133 & 0.0234522 & 0.0613346 \\
 1 & 20 & 0.00383461 & 0.003971 & 0.00393118 & 0.965654 & 0.989973 & 0.0343456 & 0.0100269 \\
 1.5 & 10 & 0.00127612 & 0.0015 & 0.00151651 & 0.850746 & 1.01101 & 0.149254 & 0.0110093 \\
 1.5 & 12 & 0.0013508 & 0.001506 & 0.00151651 & 0.896947 & 1.00698 & 0.103053 & 0.00698137 \\
 1.5 & 14 & 0.00140234 & 0.001573 & 0.00151651 & 0.891509 & 0.96409 & 0.108491 & 0.0359098
   \\
 1.5 & 16 & 0.00143789 & 0.00138 & 0.00151651 & 1.04195 & 1.09892 & 0.041947 & 0.0989231
   \\
 1.5 & 18 & 0.00146238 & 0.001549 & 0.00151651 & 0.944083 & 0.979028 & 0.0559169 & 0.0209723
   \\
 1.5 & 20 & 0.00147926 & 0.001395 & 0.00151651 & 1.0604 & 1.08711 & 0.0604019 & 0.0871068 \\
 2 & 10 & 0.000543995 & 0.000585 & 0.000646473 & 0.929906 & 1.10508 & 0.0700941 & 0.105081
   \\
 2 & 12 & 0.000575831 & 0.000627 & 0.000646473 & 0.918391 & 1.03106 & 0.0816087 & 0.0310568 \\
 2 & 14 & 0.000597803 & 0.000648 & 0.000646473 & 0.922535 & 0.997643 & 0.0774648 & 0.00235709
   \\
 2 & 16 & 0.000612955 & 0.000574 & 0.000646473 & 1.06787 & 1.12626 & 0.0678655 & 0.126259
   \\
 2 & 18 & 0.000623398 & 0.000642 & 0.000646473 & 0.971025 & 1.00697 & 0.0289752 & 0.00696668
   \\
 2 & 20 & 0.000630592 & 0.00062 & 0.000646473 & 1.01708 & 1.0427 & 0.0170838 & 0.0426978 \\
\hline
\end{tabular}
\smallskip
\caption{\it Comparison and benchmark of asymptotic formula for finite-time ruin probability ($a$) against Monte Carlo simulation ($s$) and infinite-horizon ruin probability ($i$). Simulations were performed with Algorithm~\ref{TS2} using time increment $h=0.001$ and $n=32768$ trials.}\label{tab3}
\end{center}
\end{table}

In Table~\ref{tab3}, the infinite horizon probabilities also provide reasonably good estimates for the simulated probabilities. In general, $P(\tau(u) < \infty)$ can always be used as an upper bound for the probability of ruin in finite time, but there is a question of how precise a bound it may provide. It is quite possible that it may grossly overestimate the finite time ruin  probability.  We observed empirically, for a wide range of parameter values,  that \eqref{tulta} gives a lower bound for the probability of ruin in finite time calculated via simulation, and when it does not, it overestimates only slightly. Thus (13) combined with the infinite horizon ruin probability can be used to place good bounds on the finite time ruin probabilities.

\section{Conclusion}

Up till the publication of the estimate \eqref{rft} in \cite{GM2}, simulation has been the only method of calculating the distribution of the ruin time in the convolution equivalent model. Our aim in the present paper was to show that the function $B(t)$ can be calculated numerically in an interesting and useful class of models, and to examine some of its properties.
The formula \eqref{rft}, though asymptotic, is fast to calculate, and is immediately useful for initial calibration of a model. Once initial estimates of parameters are found in this way, a full Monte Carlo simulation could be performed to refine the estimates if desired. Additionally, the function $B(t)$ in \eqref{H} or its normalised counterpart in \eqref{tulta} can be analysed to provide much insight into the ruin time distribution in these models.  In particular it provides new insight into safely loading management for these models.
Finally we observed empirically that \eqref{tulta} seems to give a lower bound on the probability of ruin in finite time calculated via simulation, which combined with the infinite horizon probability $P(\tau(u) < \infty)$ as an upper bound, might have useful practical applications.

\section{Appendix} \label{app}
Here we prove the  proposition in Section \ref{LTMB}.
\medskip

\begin{proof}[Proof of Proposition \ref{Bprop}]
Assume \eqref{c2} and $Ee^{\alpha X_1}<1$.
Let $\Bbar(t):= B(\infty)-B(t)$, $t\ge 0$,  and use \eqref{HLT} and \eqref{Bn}
 to write
\begin{equation}\begin{aligned}\label{p5}
\int_0^\infty e^{-\lambda t}\Bbar(t) d t
&=
\lambda^{-1}B(\infty) - \int_0^\infty e^{-\lambda t}B(t)d t\\
&=
\lambda^{-1}\left(\frac{q}{-\psi_X(\alpha)\kappa(0,-\alpha)} - \frac{\kappa(\lambda, 0)  }{(\lambda-\psi_X(\alpha)) \kappa(\lambda,-\alpha) }\right).
\end{aligned}\end{equation}
The last expression can be simplified to
\[
\frac{-C_1(\lambda,\alpha) (\kappa(\lambda,0)-q)+\lambda C_2(\lambda,\alpha)
+C_3(\lambda,\alpha)\left(\kappa(\lambda,-\alpha) - \kappa(0,-\alpha) \right)}
{\lambda C_4(\lambda,\alpha)},
\]
where
\[
\lim_{\lambda\dto 0}C_1(\lambda,\alpha)=\lim_{\lambda\dto 0}
(\lambda-\psi_X(\alpha))\kappa(\lambda,-\alpha)=-\psi_X(\alpha)\kappa(0,-\alpha),
\]
\[
\lim_{\lambda\dto 0}C_2(\lambda,\alpha)=\lim_{\lambda\dto 0}\kappa(\lambda, 0) \kappa(\lambda,-\alpha) =q\kappa(0,-\alpha),
\]
\[
\lim_{\lambda\dto 0}C_3(\lambda,\alpha)=\lim_{\lambda\dto 0}-\psi_X(\alpha) \kappa(\lambda, 0)=-q\psi_X(\alpha),
\]
and
\[
\lim_{\lambda\dto 0}C_4(\lambda,\alpha)=\lim_{\lambda\dto 0}-\psi_X(\alpha)   \kappa(0,-\alpha) (\lambda-\psi_X(\alpha))\kappa(\lambda,-\alpha)
=(-\psi_X(\alpha) )^2  \kappa^2(0,-\alpha).
\]
All four limits are finite and strictly positive. Also note that
\[
C_3(0,\alpha)-C_1(0,\alpha)
=  -q\psi_X(\alpha)+\psi_X(\alpha)\kappa(0,-\alpha)
= -\psi_X(\alpha)\left(q- \kappa(0,-\alpha)\right)>0.
\]

Recalling \eqref{kapexp}, we get by monotone convergence
\begin{equation}\label{p00}
\lim_{\lambda\dto 0}
\frac{\kappa(\lambda,0)-q}{\lambda}
= \lim_{\lambda\dto 0}\frac{ \lambda
 \rmd_{{L}^{-1}}+\int_{t\ge 0}\left(1-e^{-\lambda t}\right) \Pi_{{L}^{-1}}(d t)}
 {\lambda}
=\rmd_{{L}^{-1}}+\int_{t\ge 0}t\Pi_{{L}^{-1}}(d t),
\end{equation}
in the sense that both sides are finite or infinite together.
Also observe that
\begin{equation}\begin{aligned}\label{kapd}
\kappa(\lambda,-\alpha) -\kappa(0,-\alpha)
&=
\lambda \rmd_{{L}^{-1}}
+\int_{t\ge0}\left(1-e^{-\lambda t}\right) \Pi_{{L}^{-1}}(d t)
\\
&\qquad\qquad
+\int_{t\ge0} \left(1-e^{-\lambda t}\right)
\int_{h\ge0} \left(e^{\alpha h}-1\right) \Pi_{{L}^{-1}, {H}}(d t, d h).
\end{aligned}\end{equation}
Dividing by $\lambda$ and letting $\lambda\dto 0$ gives
\begin{equation}\begin{aligned}\label{p6a}
\lim_{\lambda\dto 0}
\frac{\kappa(\lambda,-\alpha) -\kappa(0,-\alpha)}{\lambda}
&= \rmd_{{L}^{-1}}+\int_{t\ge0}t\Pi_{{L}^{-1}}(d t)
\\  &\qquad\qquad
+\int_{t\ge0}t
\int_{h\ge0} \left(e^{\alpha h}-1\right) \Pi_{{L}^{-1}, {H}}(d t, d h)
\end{aligned}\end{equation}
again in the sense that both sides are finite or infinite together.
Returning to \eqref{p5},
and letting $\lambda\dto 0$ we find that
\begin{equation}\begin{aligned}\label{c40}
C_4(0,\alpha)\int_0^\infty\Bbar(t) d t
&=
\left( C_3(0,\alpha)-C_1(0,\alpha)\right)\left( \rmd_{{L}^{-1}}+
\int_{t\ge0}t\Pi_{{L}^{-1}}(d t)\right)
\\
&\qquad + C_3(0,\alpha)\int_{t\ge0} t
\int_{h\ge0} \left(e^{\alpha h}-1\right) \Pi_{{L}^{-1}, {H}}(d t, d h).
\end{aligned}\end{equation}
Since $ C_4(0,\alpha)>0$, $ C_3(0,\alpha)>C_1(0,\alpha)$ and $ C_3(0,\alpha)>0$, we find that $\int_0^\infty\Bbar(t) d t<\infty$ if and only if
the two integrals on the righthand side of \eqref{p6a} are finite.

When in addition $X$ is spectrally positive, the second integral is always finite, while if additionally $\sigma_X=0$ the first integral is also.
To see this, we first note  that the integral $\int_{h>1}e^{\alpha h}\Pi_{H}(d h)$ is finite whenever $Ee^{\alpha X_1}<\infty$ by Proposition 7.1 of \cite{G}. Then, treating first the double integral in \eqref{p6a}, we have for the component over $0\le t\le 1$,
\begin{equation}\begin{aligned}\label{p6b}
\int_{0\le t\le 1}t&
\int_{h\ge0} \left(e^{\alpha h}-1\right) \Pi_{{L}^{-1}, {H}}(d t, d h)
\\
&\le
\int_{0\le t\le 1}t\int_{h\ge0}
\left(e^{\alpha }{\bf 1}_{\{0\le h\le 1\}}+e^{\alpha h}{\bf 1}_{\{h>1\}} \right) \Pi_{{L}^{-1}, {H}}(d t)
\\ &\le
 e^{\alpha }\int_{0<t\le 1}t \Pi_{{L}^{-1}}(d t)+
\int_{h>1}e^{\alpha h}\Pi_{H}(d h)< \infty.
\end{aligned}\end{equation}
(Here the first integral on the righthand side is finite, as for any subordinator.)

To deal with the remaining part of the integral over $t>1$, we assume further at this stage that $X$ is spectrally positive.
Thus by
\cite{kypbook},  p.208,
\begin{equation}\label{vsp}
\Pi_{{L}^{-1},{H}}(d t, d h)
=\int_{v\ge 0}\Pi_X(d h+v)P(\whtau_v\in d t)d v,\ t\ge 0, h> 0.
\end{equation}
Since $X_t\to -\infty$ a.s. it follows from Theorem 1 of Doney and Maller \cite{DM} (applied to $-X$) that $E\whtau_1<\infty$.  Thus
\begin{equation}\begin{aligned}\label{v1}
&
\int_{t>1}t\int_{h\ge 0} \left(e^{\alpha h}-1\right)
\Pi_{{L}^{-1},{H}}(d t, d h)\\
&\qquad=
\int_{t>1}t\int_{h> 0} \left(e^{\alpha h}-1\right)
\int_{v\ge 0}\Pi_X(d h+v)P(\whtau_v\in d t)d v\\
&\qquad=
\int_{v\ge0}\int_{h> v} \left(e^{\alpha (h-v)}-1\right)
\Pi_X(d h)\int_{t>1}t P(\whtau_v\in d t)d v\\
&\qquad\le\int_{v\ge0}\int_{h> v} \left(e^{\alpha h}-1\right) e^{-\alpha v}
\Pi_X(d h)E(\whtau_v)d v
\\
&\qquad=E\whtau_1
\int_{h> 0} \left(e^{\alpha h}-1\right) \int_0^h
 ve^{-\alpha v}d v\Pi_X(d h)\\
&\qquad\le
E\whtau_1\int_{0<h\le 1}h \left(e^{\alpha h}-1\right)\Pi_X(d h)
+ E\whtau_1\int_0^\infty ve^{-\alpha v}d v\int_{h>1} e^{\alpha h}\Pi_X(d h).
\end{aligned}\end{equation}
The first integral on the righthand side is obviously finite, and
the second term  on the righthand side is finite
since $Ee^{\alpha X_1}<\infty$.

Finally we deal with the first integral on the righthand side of \eqref{p6a}. We need only consider values of $t>1$.   For this we further assume $\sigma_X=0$.  Since $\rmd_{\whH} >0$ it follows from Corollary 4 of \cite{doneystf} that $\rmd_{H} =0$.  Hence $X$ does not creep up and so by Theorem 3.4 of \cite{GM1}, \eqref{vsp} also holds for $h=0$.  Thus
\begin{equation}\label{vsq}
\Pi_{L^{-1}}(d t)
=\int_{v\ge 0}\pibar_X^+(v)P(\whtau_v\in d t)d v,\ t\ge 0,
\end{equation}
and hence
\begin{equation*}
\int_{t>1}t\Pi_{{L}^{-1}}(d t)
=\int_{v\ge 0}\pibar_X^+(v)\int_{t>1}tP(\whtau_v\in d t)d v
\le
E\whtau_1\int_{v\ge 0}v\pibar_X^+(v)d v<\infty
\end{equation*}
since $E(X_1^+)^2<\infty$ as a result of $Ee^{\alpha X_1}<\infty$. So the last integral converges too.

Thus the two integrals on the righthand side of \eqref{p6a} are finite
and so $\int_0^\infty tB(d t)$ is finite.
\end{proof}

\end{document}